\newcommand{\Z}{\mathbb Z}
\newcommand{\Tr}{\mathrm{Tr}}
\newcommand{\fqn}{\mathbb{F}_{q^n}}
\newcommand{\F}{\mathbb{F}}
\newtheorem{theorem}{Theorem}[section]
\newtheorem{lemma}[theorem]{Lemma}
\newtheorem{corollary}[theorem]{Corollary}
\newtheorem{proposition}[theorem]{Proposition}
\theoremstyle{definition}
\newtheorem{definition}[theorem]{Definition}
\theoremstyle{remark}
\newtheorem{remark}[theorem]{Remark}
\author[L. Reis]{Lucas Reis}
\address{Departamento de Matem\'{a}tica, Universidade Federal de Minas Gerais, Belo Horizonte MG, 31270901, Brazil}
\email{lucasreismat@gmail.com}
\keywords{finite fields, Paley graphs, sum-product estimates, bilinear forms}
\subjclass[2010]{Primary 05D05, Secondary 15A63 and 11T99}
\begin{document}

\title[Paley-like graphs from vector spaces]{Paley-like graphs over finite fields from vector spaces} 

\begin{abstract}
Motivated by the well-known Paley graphs over finite fields and their generalizations, in this paper we explore a natural multiplicative-additive analogue of such graphs arising from vector spaces over finite fields. Namely, if $n\ge 2$ and $U\subsetneq \F_{q^n}$ is an $\F_q$-vector space, $G_{U}$ is the (undirected) graph with vertex set $V(G_U)=\F_{q^n}$ and edge set $E(G_U)=\{(a, b)\in \F_{q^n}^2\,|\, a\ne b, ab\in U\}$. We describe the structure of an arbitrary maximal clique in $G_U$ and provide  bounds on the clique number $\omega(G_U)$ of $G_U$. In particular, we compute the largest possible value of $\omega(G_U)$ for arbitrary $q$ and $n$. Moreover, we obtain the exact value of $\omega(G_U)$ when $U\subsetneq \F_{q^n}$ is any $\F_q$-vector space of dimension $d_U\in \{1, 2, n-1\}$. 

\end{abstract}

\maketitle

\section{Introduction}
Let $q$ be a prime power and let $\F_q$ be the finite field with $q$ elements. When $q$ is odd, the Paley graph over $\F_q$ is the graph $\mathcal G_q$ with vertex set $V(\mathcal G_q)=\F_q$ and edge set $E(\mathcal G_q)\subseteq \F_q^2$ such that $(a, b)\in E(\mathcal G_q)$ if and only if $a-b$ is a non zero square of $\F_q$. To obtain an undirected graph we further require that $-1$ is a square in $\F_q$, i.e., $q\equiv 1\pmod 4$. A main problem in this setting is to find the clique number of $\mathcal G_q$, i.e., the greatest positive integer $N$ such that $\mathcal G_q$ contains a clique of size $N$. The Paley graphs were further generalized imposing the condition that $a-b$ is a non zero $d$-th power, where $q\equiv 1\pmod {2d}$ (see~\cite{cohen}). With the aim of studying Waring's problem over finite fields, in~\cite{pod} the authors consider another generalization of Paley graphs removing the condition $q\equiv 1\pmod {2d}$. The clique number for these kind of graphs was also studied in ~\cite{csaba} and~\cite{yip1} (see also~\cite{yip2}). We refer to~\cite{paley} for more details on such graphs, including properties and connections to other research topics, and~\cite{yip} for a nice overview on bounds for their clique numbers.

The Paley graph and their generalizations explore the interaction between differences in $\F_q$ and subgroups of the multiplicative group $\F_q^*$. A natural multiplicative-additive analogue is to consider how quotients $ab^{-1}$ interact with subgroups of the additive group $(\F_q, +)$, i.e., vector subspaces $U\subseteq \F_q$. We may naturally construct graphs in this analogy but in order to still obtain an undirected graph we should impose the condition that $u^{-1}\in U$ for every non zero $u\in U$, which is quite restrictive. In fact, according to Theorem 3 in~\cite{bence}, the latter implies that $U$ is either a subfield of $\F_q$ or $U=\{x\in \F_q\,|\, x^{p^d}+x=0\}$, where $q=p^{m}$, $p\ne 2$ is a prime and $m\equiv 0\pmod {2d}$.

In light of the previous observations, in this paper we consider the following setting. For an integer $n\ge 2$, let $\F_{q^n}$ be the unique $n$-degree extension of $\F_{q}$. Then $\F_{q^n}$ can be regarded as an $\F_q$-vector space of dimension $n$. If $U\subsetneq \F_{q^n}$ is an $\F_q$-vector space, we set $G_{U}$ as the (undirected) graph with vertex set $V(G_U)=\F_{q^n}$ and edge set $E(G_U)=\{(a, b)\in \F_{q^n}^2\,|\, a\ne b, ab\in U\}$. Here we are mainly interested in the clique number $\omega(G_U)$ of $G_U$. We combine tools from linear algebra and additive combinatorics to estimate the number $\omega(G_U)$.

Our main results, Theorems~\ref{thm:main} and~\ref{thm:main3}, provide non trivial bounds on $\omega(G_U)$ and are presented in Section 2. The rest of the paper goes as follows. In Section 3 we present Theorem~\ref{thm:main1} which describes the structure of a maximal clique in $G_U$ (i.e. a clique that cannot be extended to a larger clique). As an application of the latter, in  Proposition~\ref{prop:basic} we compute $\omega(G_U)$ when $U$ has dimension at most two, and we also prove Theorem~\ref{thm:main3}. In Section 4 we discuss some issues on bilinear forms over finite fields and provide auxiliary results. Using the machinery developed in Section 4, in Section 5 we completely describe the value of $\omega(G_U)$ when $U$ has dimension $n-1$.

\section{Main results}
Throughout this paper, $q$ is a prime power, $n\ge 2$ is an integer and $\F_{q^n}$ is the finite field with $q^n$ elements. Moreover, $\F_{q^n}^*=\F_{q^n}\setminus \{0\}$ and an element $a\in \F_{q^n}$ is a square of $\F_{q^d}$ if $a=b^2$ for some $b\in \F_{q^d}$. Otherwise, $a$ is a non square. When $d=n$, we simply say that $a$ is a square or a non square. Before we state our main results, we introduce two useful definitions.

\begin{definition}For an $\F_q$-vector space $U\subsetneq \F_{q^n}$, $G_{U}$ is the graph with vertex set $V(G_U)=\F_{q^n}$ and edge set $E(G_U)=\{(a, b)\in \F_{q^n}^2\,|\, a\ne b, ab\in U\}$. We set $d_U$ the dimension of $U$ over $\F_q$ and $\omega(G_U)$ the clique number of $G_U$.
\end{definition}

Throughout this paper, we do not take any distinction between an element of $\F_{q^n}$ and its associated vertex in $G_U$.

\begin{definition}
For sets $S, T\subseteq \F_{q^n}$, $S\pm T:=\{s\pm t\,|\, s\in S, t\in T\}$ and $S\cdot T:=\{st\,|\, s\in S, t\in T\}$. If $S=\{a\}$ is a singleton we simply write $a\pm T$ and $aT$. Moreover, $\#S$ denotes the cardinality of $S$.
\end{definition}

The following result provides a uniform upper bound on the numbers $\omega(G_U)$ for $n\ge 2$.

\begin{theorem}\label{thm:main}
Let $q$ be a prime power and $n\ge 2$. If $\omega_{q, n}$ denotes the largest value of $\omega(G_U)$ as $U$ runs over all the proper $\F_q$-vector spaces in $\F_{q^n}$, then the following hold:
\begin{enumerate}[1.]
\item $\omega_{2, n}=n+1$ for $n=2, 3, 4, 5$; 
\item If $q> 2$ or $n\ge 6$, $\omega_{q, n}=q^{(n-1)/2}+1$ if $n$ is odd and $\omega_{q, n}=q^{n/2}$ if $n$ is even.
\end{enumerate}
\end{theorem}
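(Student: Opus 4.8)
The plan is to translate the computation of $\omega_{q,n}$ into a question about symmetric bilinear forms. First, since $0\cdot b=0\in U$ for every $b$, the vertex $0$ is adjacent to all nonzero vertices; hence every maximum clique may be assumed to contain $0$, and $\omega(G_U)=1+m(G_U)$, where $m(G_U)$ is the largest size of a clique contained in $\F_{q^n}^*$. Next, if $U$ is a proper subspace then $U$ lies in some hyperplane $H$, and any clique for $U$ is a clique for $H$; since every hyperplane has the form $H_\lambda=\{x\in\F_{q^n}\,|\,\Tr(\lambda x)=0\}$ for some $\lambda\in\F_{q^n}^*$, I would obtain $\omega_{q,n}=1+\max_{\lambda\ne 0}m(G_{H_\lambda})$. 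Finally, writing $B_\lambda(x,y)=\Tr(\lambda xy)$, which is a nondegenerate symmetric $\F_q$-bilinear form on $\F_{q^n}$, a clique of $G_{H_\lambda}$ inside $\F_{q^n}^*$ is exactly a set of nonzero vectors that are pairwise orthogonal for $B_\lambda$. So the entire problem becomes: over all the forms $B_\lambda$, how large can a pairwise-orthogonal set of nonzero vectors be?

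For the upper bound I would split such a set $S$ into its isotropic part $S_0=\{v\in S\,|\,B_\lambda(v,v)=0\}$ and its anisotropic part $S_1=S\setminus S_0$. The span $\langle S_0\rangle$ is totally isotropic, so $t:=\dim\langle S_0\rangle$ is at most the Witt index $\nu_\lambda\le\lfloor n/2\rfloor$ and $|S_0|\le q^{t}-1$; the vectors of $S_1$ are anisotropic and pairwise orthogonal, hence linearly independent, and since $\langle S_1\rangle$ embeds into $\langle S_0\rangle^\perp/\langle S_0\rangle$ one gets $|S_1|\le n-2t$. Therefore $|S|\le f(t):=q^{t}-1+n-2t$ with $0\le t\le\lfloor n/2\rfloor$. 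A one-line analysis of $f$ finishes this part: for $q\ge 3$ the function $f$ is nondecreasing, so its maximum is $f(\lfloor n/2\rfloor)$, which equals $q^{n/2}-1$ or $q^{(n-1)/2}$ according to the parity of $n$; for $q=2$ one has $f(t+1)-f(t)=2^{t}-2$, so the maximum is $f(0)=n$ when $\lfloor n/2\rfloor\le 2$ (i.e.\ $n\le 5$) and $f(\lfloor n/2\rfloor)$ when $\lfloor n/2\rfloor\ge 3$ (i.e.\ $n\ge 6$). Adding $1$ reproduces the stated values.

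For the lower bounds I would exhibit, for a suitable $\lambda$, a totally isotropic subspace of the maximal dimension $\lfloor n/2\rfloor$. The key remark is that if $W$ is a $k$-dimensional subspace with $\langle W\cdot W\rangle\subsetneq\F_{q^n}$, then any nonzero $\lambda$ orthogonal (for the trace form) to $\langle W\cdot W\rangle$ makes $W$ totally isotropic for $B_\lambda$. When $n=2k$ this applies to $W=\F_{q^{k}}$, for which $\langle W\cdot W\rangle=\F_{q^{k}}$, and $W$ itself (containing $0$) is a clique of size $q^{n/2}$. When $n=2k+1$ I would take $W=\langle 1,\gamma,\dots,\gamma^{k-1}\rangle$ for a generator $\gamma$ of $\F_{q^n}/\F_q$; then $\langle W\cdot W\rangle\subseteq\langle 1,\gamma,\dots,\gamma^{2k-2}\rangle$ has dimension at most $2k-1<n$, so such a $\lambda$ exists, and adjoining to $W$ any vector of $W^\perp\setminus W$ (nonempty since $\dim W^\perp=k+1$) yields a clique of size $q^{k}+1$. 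Finally, for the sporadic cases $q=2$, $n\le 5$, I would invoke the existence of a self-dual basis of $\F_{2^n}/\F_2$ (guaranteed since $q$ is even): such a basis is orthonormal for $B_1=\Tr(xy)$, hence an anisotropic pairwise-orthogonal set of size $n$, giving a clique of size $n+1$.

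The step I expect to be most delicate is the realization of the extremal totally isotropic subspaces and orthonormal sets in characteristic $2$, where $B_\lambda$ is never alternating (because $x\mapsto\Tr(\lambda x^2)$ is a nonzero linear form) yet need not admit an orthogonal basis, so the generic odd-characteristic intuition about Witt indices and diagonalization must be replaced by the explicit $\langle W\cdot W\rangle$ argument and the self-dual basis input. Verifying that these constructions match the combinatorial optimum $f(\lfloor n/2\rfloor)$ exactly, rather than exceeding the Witt-index bound $\nu_\lambda\le\lfloor n/2\rfloor$, is where the characteristic-$2$ bookkeeping must be carried out with care.
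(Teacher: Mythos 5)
Your proposal is correct, and while it shares the paper's basic skeleton (reduce to hyperplanes $H_\lambda=\{x:\Tr_n(\lambda x)=0\}$, identify cliques with pairwise orthogonal sets for the nondegenerate form $B_\lambda(x,y)=\Tr_n(\lambda xy)$, and use a self-dual basis for the sporadic cases $q=2$, $n\le 5$), it is genuinely leaner in two respects. First, the paper obtains Theorem~\ref{thm:main} as a corollary of the finer Theorem~\ref{thm:n-1}, which computes $\omega(G_U)$ for \emph{every} hyperplane; this forces it to determine the exact maximal totally isotropic dimension $t(B_\lambda)$ for each $\lambda$, via the discriminant $\chi_q(B_\lambda)$, Lemma~\ref{lem:trace-equiv}, and the Ahmadi--Mohammadian evaluation of $M_B$ (Lemma~\ref{thm:crucial}). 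You sidestep all of that: since only the maximum over $U$ is needed, the universal Witt-index bound $t\le\lfloor n/2\rfloor$ together with a single well-chosen $\lambda$ realizing a totally isotropic subspace of dimension $\lfloor n/2\rfloor$ (a subfield for even $n$, a truncated power basis $\langle 1,\gamma,\dots,\gamma^{k-1}\rangle$ with $\lambda\perp\langle W\cdot W\rangle$ for odd $n$) suffices, uniformly in the characteristic. Second, you prove the combinatorial upper bound $\#S\le q^t-1+n-2t$ from scratch — isotropic part spans a totally isotropic subspace, anisotropic pairwise-orthogonal vectors are independent and inject into $\langle S_0\rangle^\perp/\langle S_0\rangle$ — where the paper imports the decomposition from Lemma~3 of the Ahmadi--Mohammadian paper; your direct argument is valid in every characteristic and all the steps (total isotropy of $\langle S_0\rangle$, triviality of $\langle S_0\rangle\cap\langle S_1\rangle$, the dimension count $\dim\langle S_0\rangle^\perp=n-t$) check out. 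The trade-off is that your route yields only $\omega_{q,n}$ and not the per-hyperplane values of Theorem~\ref{thm:n-1}, which the paper wants for its own sake; but as a proof of Theorem~\ref{thm:main} alone, your argument is complete and more self-contained.
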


We observe that if $U\subseteq U'\subsetneq \F_{q^n}$ are two $\F_q$-vector spaces, then $G_U$ is a subgraph of $G_{U'}$ and so $\omega(G_U)\le \omega(G_{U'})$. Since any proper $\F_q$-vector space of $\F_{q^n}$ is contained in an $\F_q$-vector space of dimension $n-1$, it follows that $\omega_{q, n}=\omega(G_U)$ for some $U$ of dimension $n-1$. In particular, Theorem~\ref{thm:main} is a straightforward consequence of Theorem~\ref{thm:n-1}, where the exact value of $\omega(G_U)$ is computed for arbitrary $U$ of dimension $n-1$. 


Theorem~\ref{thm:main} readily implies the bound $\omega(G_U)\le \omega_{q, n}$ for arbitrary $U$. If $d_U<\lfloor n/2\rfloor$ or $q=2$ and $(n, d_U)\in \{(3, 1),  (5, 2)\}$, this bound is worse than the trivial bound $\omega(G_U)\le q^{d_U}+1$. Moreover, the existence of subfields $\F_q\subset \mathbb F_{q^d}\subset \F_{q^n}$ yields obstruction to any significant improvement on the trivial bound: if $a^2\F_{q^d}\subseteq U$ with $a\ne 0$, the vertices from $a\F_{q^d}$ yield a clique in $G_U$ and then $\omega(G_U)\ge q^{d}$. In this context, we obtain the following result.

\begin{theorem}\label{thm:main3}
Let $q$ be a prime power, $n\ge 2$  and let $U\subsetneq \F_{q^n}$ be an $\F_q$-vector space of dimension $d_U\ge 1$. If $U$ does not contain a non zero square, then $\omega(G_U)\le d_U+2$. Assume now that $\omega(G_U)>d_U+2$ and let $1\le D_U\le d_U$ be the greatest divisor of $n$ such that $a^2\F_{q^{D_U}}\subseteq U$ for some $a\ne 0$. Then $\omega(G_U)=q^t+r$  for some $t, r\ge 0$ such that $r+t\le d_U$ and 
$$1\le t\le \kappa_U:=\max\left\{D_U, \frac{7}{8}d_U+\frac{7}{32\log_2q}\right\}.$$
In particular, $q^{D_U}\le \omega(G_U)\le q^{\kappa_U}+d_U$. 
\end{theorem}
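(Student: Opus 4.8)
The plan is to translate the clique condition into a statement about subspaces whose products land in a fixed subspace, settle the square‑free case by a short linear‑independence argument, and then extract the shape $q^t+r$ from a structural dichotomy whose hard half is a product‑growth (sum‑product/bilinear‑form) estimate. First I would normalize: since $0\cdot a=0\in U$ for every $a$, any clique may be enlarged to contain $0$, so $\omega(G_U)=1+\omega'$ where $\omega'$ is the largest size of a clique $S$ of nonzero vertices. Fixing $a\in S$ and writing $T=a^{-1}S$, the edge condition $a^2st\in U$ becomes $st\in W:=a^{-2}U$ for all distinct $s,t\in T$; here $1\in T$, $W$ is an $\fq$-space of dimension $d_U$, and $W$ contains a nonzero square exactly when $U$ does. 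Everything is then governed by a set $T\ni 1$ whose pairwise products of distinct elements lie in $W$.

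For the first assertion I assume $U$ (hence $W$) has no nonzero square. Taking products with $1$ gives $T\setminus\{1\}\subseteq W$, and I claim these elements are $\fq$-linearly independent: from $\sum_i c_it_i=0$, multiplying by any fixed $t_k$ yields $c_kt_k^2=-\sum_{i\ne k}c_it_it_k\in W$, and since $t_k^2$ is a nonzero square this forces $c_k=0$ for every $k$. Hence $\#(T\setminus\{1\})\le\dim W=d_U$, so $\omega(G_U)=\#T+1\le d_U+2$.

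Now assume $\omega(G_U)>d_U+2$; by the contrapositive of the previous step $U$ contains a nonzero square. A first structural point is to choose the normalizer well: some nonzero $a\in S$ must satisfy $a^2\in U$, for otherwise every $t\in T$ has $t^2\notin W$, the independence argument applies verbatim to all of $T$, and $\omega(G_U)\le d_U+2$, a contradiction. With such an $a$ we have $1\in W$. Split $T=A\sqcup B$ according to whether $t^2\in W$ or not. The relation analysis above shows that every $\fq$-linear dependence among $T$ involves only elements of $A$, so $B$ is independent modulo $V:=\langle A\rangle_{\fq}$; and because $t_it_j\in W$ for all $t_i,t_j\in A$ (now \emph{including} $i=j$), one gets that every product of two elements of $V$ lies in $W$. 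Consequently every nonzero element of $V$ may be adjoined to the clique, so for a maximum clique $A=V\setminus\{0\}$ and $1\in V$. Counting gives $\omega(G_U)=\#T+1=q^{t}+r$ with $t=\dim V\ge 1$ and $r=\#B\ge 0$, while $T\subseteq W$ with $\dim\langle T\rangle_{\fq}=t+r$ yields $r+t\le d_U$. Taking $V=\F_{q^{D_U}}$ (so that $aV=a\F_{q^{D_U}}$ and $a^2\F_{q^{D_U}}\subseteq U$) exhibits a clique of size $q^{D_U}$, giving the lower bound $\omega(G_U)\ge q^{D_U}$.

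It remains to bound $t$, which is where the real work lies. The subspace $V$ satisfies $1\in V$ and all products of its elements lie in $W$, and the $\fq$-span $\langle V\cdot V\rangle$ has dimension at most $\dim W=d_U$. I would argue by dichotomy. If $\langle V\cdot V\rangle=V$ then $V$ is multiplicatively closed and, since $1\in V$, it is a subfield $\F_{q^{t}}$ with $a^{2}\F_{q^{t}}\subseteq U$; maximality of $D_U$ then forces $t\le D_U$. Otherwise $V$ is not field-like, and I would invoke a sum‑product/bilinear‑form estimate giving the product growth $\dim\langle V\cdot V\rangle\ge\frac87\dim V-\frac1{4\log_2 q}$ (equivalently $\#\langle V\cdot V\rangle\ge 2^{-1/4}(\#V)^{8/7}$); combined with $\dim\langle V\cdot V\rangle\le d_U$ this yields $t\le\frac78 d_U+\frac7{32\log_2 q}$. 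The two cases together give $t\le\kappa_U$, and then $\omega(G_U)=q^t+r\le q^{\kappa_U}+d_U$. I expect this product‑growth inequality in the non‑field case to be the main obstacle: showing that a subspace containing $1$ whose square has dimension close to its own is essentially a subfield, with the explicit constants $8/7$ and $1/\log_2 q$, is precisely where the additive‑combinatorics and bilinear‑form machinery of Section~4 is required. I would isolate it as a standalone lemma and feed it into the structural description above, the remainder being linear algebra and bookkeeping.
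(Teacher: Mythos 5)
Your structural analysis (normalizing so that $1$ lies in the clique, splitting $T$ into the part $A$ with squares in $W$ and the independent part $B$, showing $\langle A\rangle\setminus\{0\}=A$ for a maximum clique, and deducing $\omega(G_U)=q^t+r$ with $r+t\le d_U$, together with the lower bound $q^{D_U}$) faithfully reproduces the content of Theorem~\ref{thm:main1} and the easy parts of the argument. The genuine gap is in the final step, where you bound $t$. The standalone lemma you propose --- ``either $\langle V\cdot V\rangle=V$, so $V$ is a subfield, or $\#\langle V\cdot V\rangle\ge 2^{-1/4}(\#V)^{8/7}$'' --- is false as stated. Lemma~\ref{lem:add} requires $B$ not to be contained in a \emph{proper subfield of the ambient field}, and its conclusion is only a lower bound on the \emph{minimum} of $\#A(\#B)^{1/7}$ and $(\#A)^{6/7}q^{n/7}$. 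A subspace $V$ with $1\in V$ can fail to be multiplicatively closed while still lying inside a small subfield $\F_{q^s}$; then $\langle V\cdot V\rangle\subseteq\F_{q^s}$ has dimension at most $s$, and for $\dim V=t$ close to $s$ (say $t=s-1$ with $s\ge 9$) the inequality $\dim\langle V\cdot V\rangle\ge\frac{8}{7}t-\frac{1}{4\log_2q}$ is simply violated. So your dichotomy ``subfield versus $8/7$-growth'' does not exhaust the cases, and the constant $8/7$ cannot be extracted without further input.

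The paper closes exactly this hole with two additional moves that are absent from your proposal. First, it passes to the smallest subfield $\F_{q^s}\supseteq V(H)_2$, so that the ``not contained in a proper subfield'' hypothesis of Lemma~\ref{lem:add} holds with ambient field $\F_{q^s}$; the maximality of $D_U$ is what guarantees $\F_{q^s}\cap U\subsetneq\F_{q^s}$, hence $t\le s-1$. Second --- and this is the substantive missing ingredient --- it must show that the \emph{first} term of the min in Lemma~\ref{lem:add} is the relevant one, i.e.\ that $q^{8t/7}\le q^{(6t+s)/7}$, equivalently $t\le s/2$. This is proved by observing that $V(H)_2$ is a clique in $G_{\mathcal V}$ for a proper subspace $\mathcal V\subsetneq\F_{q^s}$ and invoking Theorem~\ref{thm:main} (ultimately the bilinear-form computation of Theorem~\ref{thm:n-1} for codimension-one spaces) to get $q^t\le\omega_{q,s}<q^{(s+1)/2}$, with a separate treatment of $(q,s)=(2,3)$. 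Without some substitute for the bound $t\le s/2$, the Bourgain--Glibichuk estimate only yields $\#U\gtrsim q^{(6t+s)/7}$, which is too weak when $t$ is close to $s$, and the exponent $\frac{7}{8}d_U$ in the statement cannot be reached. Your write-up correctly identifies where the difficulty lives but does not supply the mechanism that resolves it.
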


The proof of Theorem~\ref{thm:main3} follows by a tricky combination  of Theorems~\ref{thm:main} and ~\ref{thm:main1}, and a beautiful result from Bourgain and Glibichuk on sum-product estimates over finite fields: see Subsection 3.1 for more details. As follows, we provide two interesting applications of Theorem~\ref{thm:main3}.

We observe that if $\omega(G_U)=q^t+r>d_U+2$ and $D_U$ are as in Theorem~\ref{thm:main} then $D_U\le n_U$, where $n_U$ is the largest divisor of $n$ such that $n_U\le d_U$. In particular, we obtain the following corollary.

\begin{corollary}
Let $q$ be a prime power, $n\ge 2$  and let $U\subsetneq \F_{q^n}$ be an $\F_q$-vector space of dimension $d_U\ge 1$ such that $\omega(G_U)=q^t+r>d_U+2$ with $t>0$ and $r\le d_U$. If $1\le n_U<n$ is the largest divisor of $n$ such that $n_U\le d_U$, then $t\le \max\left\{n_U, \frac{7}{8}d_U+\frac{7}{32\log_2q}\right\}$. In particular, the following hold:

\begin{enumerate}[1.]
\item $n_U=1$ if $n$ is a prime;
\item If  $1\le m\le 7$ and $d_U<\frac{n}{m}$, then $n_U\le \frac{n}{m+1}$ and so $t<\frac{7n}{8m}+\frac{7}{32\log_2q}$.
\end{enumerate}
\end{corollary}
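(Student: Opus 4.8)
The plan is to deduce the corollary essentially for free from Theorem~\ref{thm:main3}, since that theorem already supplies the bound $t \le \kappa_U = \max\{D_U, \frac{7}{8}d_U + \frac{7}{32\log_2 q}\}$ whenever $\omega(G_U) = q^t + r > d_U + 2$ with $t > 0$. The only genuinely new input is the elementary comparison $D_U \le n_U$ noted in the remark preceding the statement; once this is in hand, monotonicity of the maximum gives $\kappa_U \le \max\{n_U, \frac{7}{8}d_U + \frac{7}{32\log_2 q}\}$, and combining with $t \le \kappa_U$ yields the headline inequality at once.

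First I would verify $D_U \le n_U$. By Theorem~\ref{thm:main3}, the integer $D_U$ is a divisor of $n$ satisfying $D_U \le d_U$. Since $n_U$ is by definition the largest divisor of $n$ that is at most $d_U$, we get $D_U \le n_U$. I would also record why the hypothesis $1 \le n_U < n$ is automatic: because $U \subsetneq \F_{q^n}$ we have $d_U \le n-1 < n$, so $n$ itself is not an eligible divisor and the largest eligible one is strictly smaller than $n$ (while $1$ is always eligible as $d_U\ge 1$).

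For item~1, when $n$ is prime its only divisors are $1$ and $n$; since $n$ is ruled out by $n_U \le d_U < n$, the only remaining candidate is $n_U = 1$. For item~2, assume $d_U < n/m$. Then $n_U \le d_U < n/m$, so the integer $n/n_U$ exceeds $m$, and being an integer strictly larger than the integer $m$ it satisfies $n/n_U \ge m+1$, i.e.\ $n_U \le n/(m+1)$. To obtain the stated numerical bound I would compare both arguments of the maximum with $\frac{7n}{8m} + \frac{7}{32\log_2 q}$: on one hand $\frac{7}{8}d_U + \frac{7}{32\log_2 q} < \frac{7n}{8m} + \frac{7}{32\log_2 q}$ directly from $d_U < n/m$; on the other hand $n_U \le n/(m+1) \le \frac{7n}{8m}$, where the last step is the inequality $\frac{1}{m+1} \le \frac{7}{8m}$, equivalent to $8m \le 7(m+1)$, i.e.\ $m \le 7$---exactly the hypothesis. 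Hence both arguments lie strictly below $\frac{7n}{8m} + \frac{7}{32\log_2 q}$, giving the claimed strict bound on $t$.

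The main obstacle here is not analytic---all the substantive work is carried by Theorem~\ref{thm:main3}---but careful bookkeeping with strict versus non-strict inequalities. In particular, the passage from $n_U < n/m$ to $n_U \le n/(m+1)$ relies crucially on integrality of $n/n_U$, and the constant $m \le 7$ is precisely what forces $n/(m+1) \le \frac{7n}{8m}$; so the one point to watch is that the stated range of $m$ matches exactly the range for which this last comparison holds.
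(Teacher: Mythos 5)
Your proof is correct and takes essentially the same route the paper intends: the paper offers no separate proof, only the preceding observation that $D_U\le n_U$ (immediate since $D_U$ is a divisor of $n$ with $D_U\le d_U$ and $n_U$ is the largest such divisor), with everything else delegated to the bound $t\le\kappa_U$ from Theorem~\ref{thm:main3}. Your extra bookkeeping---the integrality step $n_U<n/m\Rightarrow n_U\le n/(m+1)$ and the check that $m\le 7$ is precisely what gives $n/(m+1)\le \frac{7n}{8m}$---correctly fills in the details the paper leaves implicit.
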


It is direct to verify that  $f(s)=q^s+d_U-s$ is a strictly increasing function on $s\in \Z_{\ge 1}$ and $d_U+2\le q^{d_U}$ for arbitrary $q, d_U\ge 2$. In particular, Theorem~\ref{thm:main3} implies the upper bound $\omega(G_U)\le q^{d_U}$ if $d_U\ge 2$. The following corollary describes the $\F_q$-vector spaces $U$ reaching this upper bound.

\begin{corollary}\label{thm:main2}
For any $\F_q$-vector space $U\subsetneq \F_{q^n}$ of dimension $d_U\ge 2$, we have that $\omega(G_U)\le q^{d_U}$ with equality if and only if one of the following holds:

\begin{enumerate}[1.]
\item $q=d_U=2$. In this case, $U=\{0, u, v, u+v\}$ for non zero $u\ne v$ and any maximal clique in $G_U$ is generated by a set of vertices $\{0, a, \frac{u}{a}, \frac{v}{a}\}$ with $a^2=\frac{uv}{u+v}$ (since the field is of characteristic two, such an $a\in \F_{q^{n}}$ always exists);
\item $U=a^2\F_{q^{d_U}}$, where $d_U<n$ divides $n$ and $a\ne 0$. In this case, a maximal clique in $G_U$ is generated by the set $a \F_{q^{d_U}}$. 
\end{enumerate}
\end{corollary}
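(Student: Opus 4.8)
The plan is to deduce the entire statement from Theorem~\ref{thm:main3}, using the monotonicity of $f(s)=q^s+d_U-s$ already recorded, and to invoke Theorem~\ref{thm:main1} only for the description of the extremal cliques themselves. For the upper bound I split on the size of $\omega(G_U)$: if $\omega(G_U)\le d_U+2$ then $\omega(G_U)\le d_U+2\le q^{d_U}$ since $q,d_U\ge 2$; if $\omega(G_U)>d_U+2$ then the structured branch of Theorem~\ref{thm:main3} gives $\omega(G_U)=q^t+r$ with $r+t\le d_U$, whence $\omega(G_U)\le q^t+(d_U-t)=f(t)\le f(d_U)=q^{d_U}$ because $f$ is increasing and $t\le d_U$. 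Thus $\omega(G_U)\le q^{d_U}$ unconditionally.

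For the forward implication I assume $\omega(G_U)=q^{d_U}$ and distinguish the case $(q,d_U)=(2,2)$, in which Condition~1 holds by definition, from the case $(q,d_U)\ne(2,2)$. In the latter a short check gives $q^{d_U}>d_U+2$ for $q,d_U\ge 2$, so $\omega(G_U)>d_U+2$ and the structured branch applies with $\omega(G_U)=q^t+r$, $t\ge 1$, $r+t\le d_U$. The chain $q^{d_U}=q^t+r\le f(t)\le f(d_U)$ together with the strict monotonicity of $f$ forces $t=d_U$, and then $r+d_U\le d_U$ gives $r=0$. Feeding $t=d_U$ into $t\le\kappa_U=\max\{D_U,\tfrac{7}{8}d_U+\tfrac{7}{32\log_2 q}\}$ and noting that $\tfrac{7}{4\log_2 q}\le\tfrac{7}{4}<2\le d_U$ makes the second term strictly smaller than $d_U$, so the maximum must be attained at $D_U$; hence $D_U\ge d_U$, giving $D_U=d_U$. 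By the definition of $D_U$ this means $d_U\mid n$ and $a^2\F_{q^{d_U}}\subseteq U$ for some $a\ne 0$, and comparing $\F_q$-dimensions forces $U=a^2\F_{q^{d_U}}$ with $d_U<n$, which is Condition~2.

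For the converse I produce explicit cliques of size $q^{d_U}$ and combine with the upper bound. Under Condition~2, distinct $x,y\in\F_{q^{d_U}}$ satisfy $(ax)(ay)=a^2xy\in a^2\F_{q^{d_U}}=U$, so $a\F_{q^{d_U}}$ is a clique of size $q^{d_U}$, giving $\omega(G_U)=q^{d_U}$, and it is maximal since it has maximum size. Under Condition~1, writing the two-dimensional $\F_2$-space as $U=\{0,u,v,u+v\}$ with $u\ne v$ (so $u+v\ne 0$ in characteristic two), I choose $a\ne 0$ with $a^2=uv/(u+v)$, which exists because squaring is a bijection of $\F_{2^n}$, and verify that $0,a,u/a,v/a$ are four distinct pairwise adjacent vertices, the nontrivial products being $a\cdot(u/a)=u$, $a\cdot(v/a)=v$ and $(u/a)(v/a)=uv/a^2=u+v$, all lying in $U$; this yields a clique of size $4=q^{d_U}$ and hence $\omega(G_U)=4$. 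The assertions that \emph{every} maximal clique has the stated shape then follow from the parametrisation of maximal cliques in Theorem~\ref{thm:main1}, specialised to the data $t=d_U,\ r=0$ (respectively to size $4$).

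The main obstacle I anticipate is the final structural step rather than the numerics: the value $\omega(G_U)=q^{d_U}$ alone determines $(t,r)$ and forces $U=a^2\F_{q^{d_U}}$ (or $q=d_U=2$), but establishing the precise description of \emph{every} maximal clique — especially the uniqueness up to scaling in Case~1, where the generator $a$ is tied to $u,v$ through $a^2=uv/(u+v)$ — requires the clique parametrisation of Theorem~\ref{thm:main1}, and some care is needed to align its normalisation (maximal cliques containing $0$, rescaled by a fixed element) with the explicit sets written above.
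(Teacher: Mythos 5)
Your proposal is correct and follows essentially the same route as the paper: both reduce the bound and the equality case to the structured branch of Theorem~\ref{thm:main3} via the strict monotonicity of $f(s)=q^s+d_U-s$, force $t=d_U$, $r=0$, and then use $t\le\kappa_U$ together with the observation that $\tfrac{7}{8}d_U+\tfrac{7}{32\log_2 q}<d_U$ for $d_U\ge 2$ to conclude $D_U=d_U$ and hence $U=a^2\F_{q^{d_U}}$, with $(q,d_U)=(2,2)$ treated separately by direct verification. Your explicit verification of the cliques in both extremal cases is slightly more detailed than the paper's ``directly verified,'' but the argument is the same.
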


\begin{proof}
The case $q=d_U=2$ is directly verified so we assume that $(q, d_U)\ne (2, 2)$. In particular,  $d_U+2< q^{d_U}$ and then, from Theorem~\ref{thm:main3}, it suffices to consider the case where $U$ contains a non zero square and $\omega(G_U)=q^t+r$ for some $t, r\ge 0$ such that $t\ge 1$ and $r+t\le d_U$. Therefore, $\omega(G_U)\le q^t+d_U-t$ and we have seen that $f(s)=q^s+d_U-s$ is a strictly increasing function on $s\in \Z_{\ge 1}$ for arbitrary $q\ge 2$. In particular, if $$k:=\left\lfloor \max\left\{D_U, \frac{7}{8}d_U+\frac{7}{32\log_2q}\right\}\right\rfloor,$$ then $1\le k \le  d_U$  and $k=d_U$ if and only if $d_U=D_U$. Theorem~\ref{thm:main3} entails that $t\le k$ and so $\omega(G_U)\le f(t)\le f(k)\le f(d_U)=q^{d_U}$ with equality $f(t)=f(d_U)$ if and only if $t=k=d_U$. Therefore, $\omega(G_U)=q^{d_U}$ if and only if $d_U=D_U$. In other words, $U$ must contain a set of the form $a^2\F_{q^{d_U}}, a\ne 0$. Since $U$ has dimension $d_U$, the latter is equivalent to $U=a^2\F_{q^{d_U}}$ for some $a\ne 0$. Since $U\subsetneq \F_{q^n}$, it is clear that $d_U<n$ must be a divisor of $n$.
\end{proof}

\section{Miscellanea}
The following theorem provides a special decomposition of the vertices of a maximal clique in $G_U$.

\begin{theorem}\label{thm:main1}
Let $U\subsetneq \F_{q^n}$ be an $\F_q$-vector space of dimension $d_U\ge 1$ and let $G$ be any maximal clique in $G_U$. If $V(G)_2$ denotes the set of elements $\alpha\in V(G)$ such that $\alpha^2\in U$, then the following hold:
\begin{enumerate}[1.]
\item $V(G)_2$ is an $\F_q$-vector space; 
\item The elements of $V(G)_1:=V(G)\setminus V(G)_2$ are $\F_q$-linearly independent;
\item If $V(G)_1$ is non empty and $W$ is the $\F_q$-vector space generated by the elements of $V(G)_1$, then $W\cap V(G)_2=\{0\}$.
\end{enumerate}
In particular, $\omega(G_U)=q^t+r$ for some $t, r\ge 0$ such that either $t=0$ and $r\le d_U+1$ or $t\ge 1$ and $r+t\le d_U$.
\end{theorem}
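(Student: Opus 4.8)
The plan is to exploit two facts in tandem. First, since $U$ is an $\F_q$-vector space containing $0$, the vertex $0$ is adjacent to every other vertex, so every maximal clique $G$ contains $0$. Second, by maximality, any $v\in \F_{q^n}$ satisfying $v\gamma\in U$ for all $\gamma\in V(G)$ must already lie in $V(G)$ (for otherwise $V(G)\cup\{v\}$ would be a strictly larger clique). The whole argument is driven by feeding carefully chosen elements into this maximality criterion, while using repeatedly that the product of two distinct clique vertices lies in $U$.

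For part 1 I would take $\alpha,\beta\in V(G)_2$ and check that $\alpha+\beta$ passes the maximality criterion: for each $\gamma\in V(G)$ one has $(\alpha+\beta)\gamma=\alpha\gamma+\beta\gamma$, and each summand lies in $U$ (if $\gamma$ differs from $\alpha$ resp.\ $\beta$ this is the edge condition, and if $\gamma=\alpha$ or $\gamma=\beta$ it follows from $\alpha,\beta\in V(G)_2$). Hence $\alpha+\beta\in V(G)$, and expanding $(\alpha+\beta)^2=\alpha^2+2\alpha\beta+\beta^2\in U$ shows $\alpha+\beta\in V(G)_2$. The same product computation handles scalar multiples $c\alpha$, so $V(G)_2$ is an $\F_q$-subspace.

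For parts 2 and 3 the key device is a "multiply by one of the vectors" trick. Given a nontrivial relation $\sum_i c_i\alpha_i=0$ among elements of $V(G)_1$ with $c_1\ne 0$, multiplying by $\alpha_1$ yields $c_1\alpha_1^2=-\sum_{i\ge 2}c_i\alpha_1\alpha_i$; the right-hand side lies in $U$ because each $\alpha_1\alpha_i$ with $i\ge 2$ is a product of distinct clique vertices, so $\alpha_1^2\in U$, contradicting $\alpha_1\in V(G)_1$. This proves part 2. The identical computation applied to any $w=\sum_i c_i\alpha_i\in W\cap V(G)_2$ (noting that $w\ne\alpha_1$, since $V(G)_1\cap V(G)_2=\varnothing$, hence $w\alpha_1\in U$) forces $w=0$, proving part 3.

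Finally, for the numerical conclusion I would write $\omega(G_U)=|V(G)_2|+|V(G)_1|=q^t+r$, where $t=\dim_{\F_q}V(G)_2$ and $r=|V(G)_1|$ is the size of a basis of $W$ by part 2. To bound $t$ and $r$, note $V(G)_2\cdot V(G)_2\subseteq U$ and $V(G)_2\cdot V(G)_1\subseteq U$. If $t\ge 1$, pick a nonzero $\alpha_0\in V(G)_2$; then $x\mapsto\alpha_0 x$ is an injective $\F_q$-linear map carrying the direct sum $V(G)_2\oplus W$ (of dimension $t+r$ by part 3) into $U$, whence $t+r\le d_U$. If $t=0$, I instead multiply the $(r-1)$-dimensional space spanned by $V(G)_1\setminus\{\alpha_1\}$ by a fixed $\alpha_1\in V(G)_1$ into $U$, giving $r-1\le d_U$, i.e.\ $r\le d_U+1$. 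I expect part 1 to be the main obstacle, since that is where the maximality hypothesis must be converted into genuine algebraic closure of $V(G)_2$, and one must verify the product condition against every clique vertex, including the degenerate cases where the candidate vertex coincides with an existing one.
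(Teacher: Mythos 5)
Your proposal is correct and follows essentially the same route as the paper: establish that $0$ lies in every maximal clique, use the maximality criterion to show $V(G)_2$ is closed under $\F_q$-linear combinations, prove parts 2 and 3 by multiplying a putative linear relation by one of the vectors and isolating the square term, and derive the bounds on $t$ and $r$ by pushing the span of the clique (or of $V(G)_1\setminus\{\alpha_1\}$ when $t=0$) into $U$ via multiplication by a fixed nonzero clique element. The only cosmetic difference is in part 3, where you use that $w\in W\cap V(G)_2$ is itself a clique vertex distinct from $\alpha_k$ to get $w\alpha_k\in U$, while the paper multiplies the full mixed relation by $\beta_k$; both reduce to the same computation.
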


\begin{proof}
We prove the items separately.
\begin{enumerate}[1.]
\item Since $G$ is maximal, $0\in V(G)_2$ and it suffices to show that, for any $u, v\in V(G)_2, w\in V(G)$ and $\lambda\in \F_{q}$, we have that $(u+\lambda v)^2, (u+\lambda v)w\in U$. From hypothesis, $u^2, v^2, uv\in U$ and then $(u+\lambda v)^2=u^2+2\lambda uv+\lambda^2v^2\in U$. Moreover, since $u, v\in V(G)_2$, we have that $uw, vw\in U$ for every $w\in V(G)$. Hence $(u+\lambda v)w\in U$ for every $w\in V(G)$.

\item The result is trivial if $V(G)_1$ has at most one element, so we assume otherwise. Suppose that there exists an integer $s\ge 2$ and elements $\beta_1, \ldots, \beta_s\in V(G)_1, c_1, \ldots, c_s\in \F_q$ such that $\sum_{i=1}^sc_i\beta_i=0$.
Hence, for every $1\le k\le s$, we have that
$$\beta_k\cdot \sum_{i=1}^sc_i\beta_i=0\in U.$$
From hypothesis, $\beta_k\beta_j\in U$ for every $j\ne k$ and so we conclude that $c_k\beta_k^2\in U$. Since $\beta_k^2\not\in U$, we have that $c_k=0$.  Since $k$ is arbitrary, it follows that $c_1=\cdots=c_s=0$.
\item It suffices to consider the case where $V(G)_1$ is non empty and $V(G)_2$ has dimension at least one. Let $\{\alpha_1, \ldots, \alpha_m\}$ be an $\F_q$-basis for $V(G)_2$. Suppose that there exist elements $\beta_1, \ldots, \beta_s\in V(G)_1$, $c_1, \ldots, c_s\in \F_q$ and $e_1, \ldots, e_m\in \F_q$ such that $\sum_{i=1}^{m}e_i\alpha_i+\sum_{j=1}^sc_i\beta_j=0$. Hence, for every $1\le k\le s$, we have that 
$$\beta_k\left(\sum_{j=1}^{m}e_j\alpha_j+\sum_{i=1}^sc_i\beta_i\right)=0.$$
From hypothesis, $\beta_k\alpha_i, \beta_k\beta_j\in U$ for every $1\le i\le m$ and every $1\le j\le s$ with $j\ne k$. In particular, we conclude that $c_k\beta_k^2\in U$. Since $\beta_k^2\not\in U$, we have that $c_k=0$. Since $1\le k\le s$ is arbitrary, we conclude that $\sum_{j=1}^{m}e_i\alpha_i=0$ and so $e_j=0$ for every $1\le j\le m$ since the $\alpha_j$'s are $\F_q$-linearly independent.
\end{enumerate}

To conclude the proof, let $H$ be a maximal clique in $G_U$ with $\#V(H)=\omega(G_U)$. If $q^t$ and $r$ are the cardinality of the sets $V(H)_2$ and $V(H)_1$, respectively, we have that $\omega(G_U)=q^t+r$. If $r>0$ and $\beta \in V(H)_1$, we have that $\beta\ne 0$ and item 2 entails that the $r-1$ elements $\beta\cdot w, w\in V(H)_1\setminus \{\beta\}$ are $\F_q$-linearly independent. However, from hypothesis, such elements also lie in $U$. Hence $r-1\le d_U$ and so $r\le d_U+1$. Moreover if $t>0$, there exists a non zero element $\alpha \in V(H)_2$. In particular, $\alpha (W+ V(G)_2)\subseteq U$, where $W$ is the $\F_q$-vector space generated by the elements of $V(G)_1$. From item 3, the $\F_q$-vector space $W+ V(G)_2$ has dimension $r+t$ and, since $\alpha\ne 0$, we conclude that $r+t\le d_U$.
\end{proof}

We obtain the following bounds for $\omega(G_U)$.

\begin{corollary}\label{cor:1}
For any $\F_q$-subspace $U\subsetneq \F_{q^n}$ of dimension $d_U\ge 2$, either $\omega(G_U)=q^{d_U}$ or $\omega(G_U)\le q^{d_U-1}+1$.
\end{corollary}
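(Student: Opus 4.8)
The plan is to read off everything from the final clause of Theorem~\ref{thm:main1}, which already records that $\omega(G_U)=q^t+r$ for some $t,r\ge 0$ subject to one of two regimes: either $t=0$ and $r\le d_U+1$, or $t\ge 1$ and $r+t\le d_U$. Thus the task reduces to a purely arithmetic maximization: among all admissible pairs $(t,r)$, I want to show that the value $q^t+r$ either hits $q^{d_U}$ exactly or stays at or below $q^{d_U-1}+1$, with nothing landing strictly in between. The engine for this is the auxiliary function $f(s)=q^s+d_U-s$, whose strict monotonicity on $\Z_{\ge 1}$ is already noted in the discussion preceding Corollary~\ref{thm:main2}.

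First I would dispose of the case $t\ge 1$. Here $r\le d_U-t$ gives $\omega(G_U)=q^t+r\le q^t+d_U-t=f(t)$, with $1\le t\le d_U$. If $t=d_U$ the constraint $r+t\le d_U$ forces $r=0$, so $\omega(G_U)=q^{d_U}$. Otherwise $1\le t\le d_U-1$, and monotonicity of $f$ yields $\omega(G_U)\le f(t)\le f(d_U-1)=q^{d_U-1}+1$. This already proves the dichotomy whenever the even-square part $V(H)_2$ of the extremal clique is nontrivial.

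It remains to treat $t=0$, i.e.\ the maximal clique has trivial $V(H)_2$, where $\omega(G_U)=1+r\le d_U+2$. The point is that the bound $d_U+2$ comfortably sits below $q^{d_U-1}+1$ in almost all cases: the inequality $d_U+2\le q^{d_U-1}+1$ is equivalent to $d_U+1\le q^{d_U-1}$, which holds for every $q\ge 2$ once $d_U\ge 3$ (since $2^{d_U-1}\ge d_U+1$ there, an easy induction) and also for $d_U=2$ as soon as $q\ge 3$. The single exceptional pair is $(q,d_U)=(2,2)$, and this is the one place that needs a separate look.

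The main (and only) obstacle is therefore this edge case $(q,d_U)=(2,2)$ with $t=0$, where the generic bound $d_U+2=4$ overshoots $q^{d_U-1}+1=3$. Fortunately the dichotomy survives for a trivial reason: here $\omega(G_U)=1+r\le 4=q^{d_U}$, so the only value exceeding $q^{d_U-1}+1=3$ is $4$ itself, which equals $q^{d_U}$. Collecting the three cases gives exactly the stated alternative, and I would close by remarking that the two ranges $\{q^{d_U}\}$ and $[1,q^{d_U-1}+1]$ are genuinely disjoint for $d_U\ge 2$, since $q^{d_U}-(q^{d_U-1}+1)=q^{d_U-1}(q-1)-1\ge 1>0$.
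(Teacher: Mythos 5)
Your proof is correct and follows essentially the same route as the paper: both rest on the decomposition $\omega(G_U)=q^t+r$ from Theorem~\ref{thm:main1} together with the monotonicity of $f(s)=q^s+d_U-s$ on $s\ge 1$ and the inequality $d_U+2\le q^{d_U-1}+1$ for $(q,d_U)\ne(2,2)$. The only (minor) divergence is the exceptional pair $(q,d_U)=(2,2)$, which the paper dispatches by citing Corollary~\ref{thm:main2} (so that $\omega(G_U)=4$ there), whereas you handle it self-containedly by observing that the only admissible value exceeding $q^{d_U-1}+1=3$ is $4=q^{d_U}$; your treatment is slightly more economical, and you are also more explicit than the paper that $t=d_U$ forces $r=0$ and hence the exact value $q^{d_U}$ rather than merely the bound $\omega(G_U)\le q^{d_U}$.
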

\begin{proof}
If $q=d_U=2$, Corollary~\ref{thm:main2} implies that $\omega(G_U)=q^{d_U}$ so the statement is true in this case.  Suppose that $(q, d_U)\ne (2, 2)$. From Theorem~\ref{thm:main1}, either $\omega(G_U)\le d_U+2$ or $\omega(G_U)\le q^s+d_U-s$ for some $1\le s\le d_U$. It is direct to verify that $f(s)=q^s+d_U-s$ is a strictly increasing function on $s\in \Z_{\ge 1}$. The result now follows by the fact that $d_U+2\le q^{d_U-1}+1$ for every $q, d_U\ge 2$ with $(q, d_U)\ne (2, 2)$.
\end{proof}

\begin{lemma}\label{cor:2}
If $U\subsetneq \F_{q^n}$ is an $\F_q$-vector space of dimension $d_U\ge 1$, then $\omega(G_U)\ge 3$. Moreover, if $U$ contains a non zero square, we have that $\omega(G_U)\ge q+\min\{1, d_U-1\}$ and, otherwise, $\omega(G_U)=3$.
\end{lemma}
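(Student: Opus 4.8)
The plan is to handle the three assertions in turn, beginning with the universal lower bound $\omega(G_U)\ge 3$ and then splitting according to whether $U$ contains a non zero square; none of this requires more than the multiplicativity of the quadratic character, though Theorem~\ref{thm:main1} could be used as a substitute at one point.

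First I would establish $\omega(G_U)\ge 3$ by exhibiting a triangle. Since $0$ is adjacent to every other vertex (as $0\cdot b=0\in U$), it suffices to find nonzero $a\ne b$ with $ab\in U$. Fixing any nonzero $u\in U$ (which exists because $d_U\ge 1$) and any nonzero $a$ with $a^2\ne u$ — possible since $q^n\ge 4$ and $u$ has at most two square roots — the set $\{0,a,u/a\}$ is a clique of size $3$.

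For the lower bound when $U$ contains a non zero square, write $s=a^2\in U$ with $a\ne 0$. Because $U$ is an $\F_q$-vector space, $a^2\F_q\subseteq U$, and then $a\F_q$ is a clique of size $q$: any two of its elements $\lambda a,\mu a$ satisfy $(\lambda a)(\mu a)=\lambda\mu a^2\in U$. This already gives $\omega(G_U)\ge q$, which settles the bound when $d_U=1$. If $d_U\ge 2$ I would extend this clique by one vertex: the space $a^{-1}U$ has dimension $d_U\ge 2$ and contains the line $a\F_q$ (since $a^2\F_q\subseteq U$), so it contains some $\beta\notin a\F_q$; as $\beta a\in U$, the vertex $\beta$ is adjacent to every element of $a\F_q$, producing a clique of size $q+1$. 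Together these give $\omega(G_U)\ge q+\min\{1,d_U-1\}$.

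The remaining and most delicate point is that, when $U$ contains no non zero square, $\omega(G_U)=3$; by the first step only the upper bound $\omega(G_U)\le 3$ needs proof. I would first note that this hypothesis can occur only in odd characteristic, since for even $q$ squaring is a bijection of $\F_{q^n}$ and every nonzero element of $U$ would be a square. With $q$ odd I argue by contradiction using the quadratic character $\chi$ of $\F_{q^n}$: a clique of size $\ge 4$ contains (after discarding $0$ if present) three nonzero vertices $\beta_1,\beta_2,\beta_3$ with $\beta_i\beta_j\in U$ for $i\ne j$, so each product is a non zero non square and $\chi(\beta_i\beta_j)=-1$. Multiplying the three relations yields $\chi\!\left((\beta_1\beta_2\beta_3)^2\right)=(-1)^3=-1$, contradicting the fact that $\chi$ takes the value $+1$ on every square. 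The crux is precisely this multiplicativity: the non squares of $\F_{q^n}$ are not closed under products, so $U$ cannot simultaneously contain $\beta_1\beta_2$, $\beta_1\beta_3$ and $\beta_2\beta_3$. (Alternatively one could invoke the $\F_q$-linear independence of $V(G)_1$ from Theorem~\ref{thm:main1}, but the character computation delivers the sharper bound $\omega(G_U)\le 3$ directly.) Hence no clique of size exceeding $3$ exists, completing the proof.
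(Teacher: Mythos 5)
Your proof is correct and follows essentially the same route as the paper: the triangle $\{0,a,u/a\}$, the clique $a\F_q$ extended by one vertex when $d_U\ge 2$, and the observation that three pairwise products of nonzero vertices cannot all be non squares because their product is the square $(\beta_1\beta_2\beta_3)^2$. The only cosmetic differences are that the paper selects $a$ with $a^2\notin U$ via a counting argument (you more simply require $a^2\ne u$) and phrases the final contradiction through the specific clique $\{0,a,u/a,v/a\}$ rather than symmetrically in three nonzero vertices.
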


\begin{proof}
Since $\#U\le q^{n-1}<\frac{q^n-1}{\gcd(q-1, 2)}=\#\{a^2\,|\, a\in \F_{q^n}^*\}$ for every $q, n\ge 2$, there exists $a\in \F_{q^n}^*$ such that $a^2\in \F_{q^n}^* \setminus U$ and, for such an $a$, $\{0, a, \frac{u}{a}\}$ yields a clique of size $3$ in $G_U$. In particular, $\omega(G_U)\ge 3$ for arbitrary $U$.
Suppose now that $a^2\in U$ for some $a\ne 0$. In this case, any two vertices from $a \F_q$ are connected in $G_U$ and they are also all connected to another vertex $\frac{u}{a}\not\in a\F_q$ if $d_U>1$ (just pick any element $u\in U\setminus a^2\F_q$). 

It remains to prove that $\omega(G_U)<4$ if $U$ does not contain a non zero square. Since any maximal clique of $G_U$ contains the element $0$, if the former does not hold, we conclude that $\{0, a, \frac{u}{a}, \frac{v}{a}\}$ is a clique in $G_U$ for some $a\ne 0$ and some distinct $u, v\in  U$ (observe that $v/a, u/a\ne a$ since $U$ does not contain a non zero square). From hypothesis, $u, v$ are non squares and then $uv$ is a square. In particular, $\frac{uv}{a^2}$ is a non zero square, a contradiction with $\frac{uv}{a^2}\in U$. 
\end{proof}

As a nice application of the previous results, we completely describe $\omega(G_U)$ when $d_U=1, 2$. 
\begin{proposition}\label{prop:basic}
Let $U\subsetneq \F_{q^n}$ be an $\F_q$-vector space of dimension $d_U$. Then the following hold:

\begin{enumerate}[1.]
\item Assume $d_U=1$. Then $\omega(G_U)=3$ if $q=2, 3$ and $\omega(G_U)\in \{3, q\}$ if $q>3$. Moreover, in the latter case, $\omega(G_U)=q$ or $3$ according to whether $U$ contains a non zero square or not, respectively.

\item Assume $d_U=2$. Then $\omega(G_U)\in \{3, q+1, q^2\}$ and the following hold:
\begin{itemize}
\item $\omega(G_U)=3$ if and only if $U$ does not contain a non zero square;
\item $\omega(G_U)=q^2$ if and only if $q=2$ or $U=a^2\F_{q^2}$ with $a\ne 0$.
\end{itemize}
\end{enumerate}
\end{proposition}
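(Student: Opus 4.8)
The plan is to leverage the general machinery already established: Theorem~\ref{thm:main1} (the $q^t + r$ decomposition of any maximal clique), Lemma~\ref{cor:2} (the lower bound $\omega(G_U) \geq 3$ and the dichotomy based on whether $U$ contains a nonzero square), and Corollary~\ref{cor:1} (the gap result $\omega(G_U) = q^{d_U}$ or $\omega(G_U) \leq q^{d_U-1}+1$). The strategy for both items is to combine the upper bounds from these results with explicit constructions witnessing the lower bounds, then pin down exactly when each value is attained according to the square/non-square structure of $U$.

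For item 1 ($d_U = 1$), I would first handle the case where $U$ contains no nonzero square: Lemma~\ref{cor:2} immediately gives $\omega(G_U) = 3$, covering part of the claim. When $U$ does contain a nonzero square, write $U = \alpha^2 \F_q$ for suitable $\alpha$; then the $q$ vertices $\alpha \F_q$ form a clique, giving $\omega(G_U) \geq q$. For the upper bound, Theorem~\ref{thm:main1} forces $\omega(G_U) = q^t + r$ with $t \in \{0,1\}$ (since $r+t \le d_U = 1$ when $t \geq 1$, and $r \le d_U+1 = 2$ when $t=0$), so the only possible values are $\omega(G_U) \leq 2$ (excluded by $\omega \geq 3$), $\omega(G_U) = 3$, or $\omega(G_U) = q$. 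The cases $q = 2, 3$ need separate attention: here $q \leq 3$ means the value $q$ coincides with or is dominated by $3$, so I expect $\omega(G_U) = 3$ uniformly, which must be verified by checking that no clique of size $q$ beats the trivial size-$3$ clique (using Lemma~\ref{cor:2} together with $q \le 3$). For $q > 3$, the two candidate values $3$ and $q$ are genuinely distinct, and the dichotomy follows from whether $U$ contains a nonzero square.

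For item 2 ($d_U = 2$), I would again split on whether $U$ contains a nonzero square. If not, Lemma~\ref{cor:2} gives $\omega(G_U) = 3$, establishing the first bullet. If $U$ does contain a nonzero square, Lemma~\ref{cor:2} upgrades the lower bound to $\omega(G_U) \geq q + \min\{1, d_U - 1\} = q + 1$ (since $d_U = 2$). The upper bound comes from Theorem~\ref{thm:main1}: the decomposition $\omega(G_U) = q^t + r$ with the constraints $r + t \leq 2$ (for $t \geq 1$) limits the possibilities to $q^2$ (with $t=2, r=0$), $q+1$ (with $t=1, r=1$), or $q$ (with $t=1,r=0$), the last being absorbed since $\omega \geq q+1$. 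This yields $\omega(G_U) \in \{3, q+1, q^2\}$. For the second bullet, I would invoke Corollary~\ref{cor:1}, which for $d_U = 2$ states $\omega(G_U) = q^2$ or $\omega(G_U) \leq q + 1$; hence $\omega(G_U) = q^2$ precisely when the top case of the decomposition is realized. Identifying this with the condition ``$q = 2$ or $U = a^2\F_{q^2}$'' then reduces to Corollary~\ref{thm:main2}, which characterizes exactly the spaces achieving $\omega(G_U) = q^{d_U}$.

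The main obstacle I anticipate is the small-field bookkeeping in item 1 for $q \in \{2, 3\}$ and the delicate identification in item 2 of when the maximal-clique decomposition actually attains $q^2$ rather than merely $q+1$. For the latter, the substantive content is already packaged in Corollary~\ref{thm:main2}, so the real work is verifying that its hypotheses ($d_U \ge 2$) apply and that the characterization ``$q=2$ or $U = a^2\F_{q^2}$'' matches the condition $D_U = d_U = 2$ appearing there; I would confirm that when $q = 2$ every two-dimensional $U$ containing a nonzero square automatically realizes the top case (consistent with Corollary~\ref{thm:main2}, item 1), while for $q > 2$ the top case requires the rigid structure $U = a^2\F_{q^2}$. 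The square/non-square counting and the explicit clique constructions are routine given Lemma~\ref{cor:2}, so no genuinely new estimate should be needed beyond what the cited results provide.
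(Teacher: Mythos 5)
Your proposal is correct and follows essentially the same route as the paper: item 1 is exactly the combination of Theorem~\ref{thm:main1} (which restricts the possible values to $\{3,q\}$ after discarding $t=0$, $r\le 1$) with Lemma~\ref{cor:2}, and item 2 is the same combination plus Corollary~\ref{thm:main2} to isolate the $q^2$ case and to dispose of $q=2$. The only cosmetic difference is that you additionally invoke Corollary~\ref{cor:1}, which is not needed once Theorem~\ref{thm:main1} and Lemma~\ref{cor:2} have excluded the value $q$ in the square-containing case.
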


\begin{proof}
We prove items 1 and 2 separately.
\begin{enumerate}[1.]
\item This follows directly by Theorem~\ref{thm:main1} and Lemma~\ref{cor:2}.
\item Corollary~\ref{thm:main2} readily proves the result for $q=2$. Suppose that $q>2$. Theorem~\ref{thm:main1} entails that $\omega(G_U)\in \{3, q+1, q^2\}$. Since $q+\min\{1, d_U-1\}=q+1>3$, Lemma~\ref{cor:2} implies that $\omega(G_U)=3$ if and only if $U$ does not contain a non zero square. Moreover, for $q>2$, Corollary~\ref{thm:main2} entails that $\omega(G_U)=q^2=q^{d_U}$ if and only if $U=a^2\F_{q^2}$ for some $a\ne 0$. 
\end{enumerate}
\end{proof}

\subsection{Proof of Theorem~\ref{thm:main3}}
We have the following result (see Lemma 5 in ~\cite{bourgain}).

\begin{lemma}\label{lem:add}
For arbitrary sets $A, B\subseteq \F_{q^n}$, such that $\#A > 1$ and $B$ is not contained in a proper subfield of $\F_{q^n}$,
$$\max\{\# (A+A\cdot B) , \# (A-A\cdot B)\}\ge 2^{-1/4}\cdot \min \{\# A(\#B)^{1/7}, (\# A)^{6/7}q^{n/7}\}.$$

\end{lemma}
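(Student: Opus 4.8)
This is a sum--product estimate of Bourgain--Glibichuk type, and the two competing terms reflect two regimes separated by the threshold $\#A\cdot \#B\approx q^n$: below it the bound should grow with $\#B$, and above it it should saturate near $q^n$. Accordingly, I would set $K=\max\{\#(A+A\cdot B),\, \#(A-A\cdot B)\}$ and aim to show $K$ is large by an iterative additive-combinatorial argument. The first elementary observation is that for every fixed $b\in B$ the dilate $bA$ satisfies $A\pm bA\subseteq A\pm A\cdot B$, whence $\#(A+bA)\le K$ and $\#(A-bA)\le K$. Thus $A$ has small doubling against each of its $B$-dilates, with Ruzsa distance $d(A,bA)=\log\frac{\#(A-bA)}{\#A}\le \log(K/\#A)$ uniformly in $b$.

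The engine of the proof is to iterate the operation $X\mapsto X\pm X\cdot B$ starting from $X_0=A$. Writing $B^{\{i\}}$ for the set of $i$-fold products of elements of $B$, one checks that the $j$-th iterate contains $A+A\cdot B+A\cdot B^{\{2\}}+\cdots+A\cdot B^{\{j\}}$, so that higher products of $B$-elements progressively enter the additive picture. The point is twofold. First, each iteration should enlarge the cardinality by at most a controlled factor of order $K/\#A$; this is where the Pl\"unnecke--Ruzsa inequalities and the Ruzsa triangle inequality enter, passing freely between sum and difference sets (which is exactly why both $A+A\cdot B$ and $A-A\cdot B$ occur in the statement, and the likely source of the constant $2^{-1/4}$). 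After $j$ steps one would then have a bound of the shape $\#X_j\le \#A\cdot (K/\#A)^{cj}$ for an absolute constant $c$. Second, because $B$ is not contained in a proper subfield of $\fqn$, the products of elements of $B$ eventually additively generate the whole field, so after boundedly many steps $X_j$ either reaches cardinality of order $q^n$ or, before saturating, attains cardinality of order $\#A\cdot \#B$.

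Combining the two directions gives the estimate. In the saturated regime one obtains $q^n\lesssim \#A\cdot(K/\#A)^{7}$, which rearranges to $K\gtrsim (\#A)^{6/7}q^{n/7}$; in the unsaturated regime the same bookkeeping yields $K\gtrsim \#A\cdot(\#B)^{1/7}$, and taking the minimum reproduces the claimed inequality. The number $7$ of effective iterations, and hence the exponent $1/7$, are forced by optimizing the trade-off between how fast $\#X_j$ is allowed to grow and how many steps are needed for the products of $B$ to fill the field.

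The main obstacle is precisely this growth-control step, and I expect it to be the crux. The small-doubling hypothesis concerns $A$ and its dilates, not the rapidly growing intermediate sets $X_j$, so one cannot naively iterate a doubling bound: a careful application of Pl\"unnecke--Ruzsa (or a direct multiplicative-energy computation) is needed to keep the per-step loss bounded and the exponent fixed, rather than degrading like $1/\log$. Equally delicate is converting the hypothesis that $B$ is not contained in a proper subfield into the quantitative statement that products of boundedly many elements of $B$ already span $\fqn$ additively up to the relevant scale; here one must rule out that $A$ is trapped in a $B$-invariant additive structure (an approximate subfield), which is exactly the mechanism that would otherwise defeat expansion. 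Making these two points precise, with explicit constants, is the technical heart of the argument, and is the reason I would ultimately lean on the cited work of Bourgain and Glibichuk rather than reprove the full estimate here.
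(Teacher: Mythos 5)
The paper does not prove this lemma at all: it is quoted verbatim as Lemma 5 of Bourgain--Glibichuk, and your proposal likewise (and correctly) ends by deferring to that reference, so you and the paper take essentially the same approach. Your accompanying sketch of the iteration/Pl\"unnecke--Ruzsa/subfield-obstruction mechanism is a reasonable heuristic for how the cited result is obtained, but since neither you nor the paper supplies the details, there is nothing further to compare.
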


We proceed to the proof of Theorem~\ref{thm:main3}. Let $H$ be a maximal clique in $G_U$ with $\# V(H)=\omega(G_U)$ and let $V(H)_2$ with $\# V(H)_2=q^t$ be as in Theorem~\ref{thm:main1}. In particular, $\omega(G)=q^t+r$ for some $t, r\ge 0$ such that either $t=0$ and $r\le d_U+1$ or $t>0$ and $r+t\le d_U$. If $t=0$, then $\omega(G_U)\le d_U+2$ and so the case $t=0$ is settled. From now and on we assume that $\omega(G_U)>d_U+2$, hence $t>0$ and $U$ contains a non zero square. In particular, the case $t\le D_U$ is also settled and we may assume that $t>D_U$.  In this case, $t\ge D_U+1\ge 2$ and there exists a non zero element $\alpha \in V(H)_2$. Hence $\alpha^2\in U$ and so $(a_1, a_2)\in E(G_U)$ if and only if $(\alpha ^{-1}a_1, \alpha^{-1}a_2)\in E(G_{\alpha^{-2}U})$. The latter implies that the graphs $G_U$ and $G_{\alpha^{-2}U}$ are isomorphic and, in particular,  we may assume that $1\in V(H)_2$.  

Since $U$ is an $\F_q$-vector space, $1\in V(H)_2$ and $V(H)_2\cdot V(H)_2\subseteq U$, we conclude that the $\F_q$-vector space $\mathcal V$ generated by the elements of the sets $V(H)_2\pm V(H)_2\cdot V(H)_2$ is contained in $U$. Moreover, since $V(H)_2$ is an $\F_q$-vector space, any proper subfield of $\F_{q^n}$ containing $V(H)_2$ necessarily contains $\F_q$. Let $s\le n$ be the smallest positive integer such that $V(H)_2$ is contained in $\F_{q^s}$, hence $V(H_2)$ is not contained in any proper subfield of $\F_{q^s}$ and $t\le s$. In this case, $\mathcal V\subseteq \F_{q^s}$ and then $\mathcal V\subseteq \F_{q^s}\cap U$. Since $s\ge t>D_U$, the maximality of $D_U$ implies that $\F_{q^s}\cap U\ne \F_{q^s}$ and so $t, d_{\mathcal V}\le s-1$. In particular, $s\ge t+1\ge 3$. 

We observe that $V(H)_2$ yields a clique in $G_{\mathcal V}$ and, in particular, $\omega(G_{\mathcal V})\ge q^t$. Since $s\ge 3$ and $\mathcal V\subsetneq \F_{q^s}$, Theorem~\ref{thm:main} implies that $q^t\le \omega_{q, s}$ and we easily verify that $\omega_{q, s}<q^{(s+1)/2}$ for any $s\ge 3$ and $q\ge 2$ with $(q, s)\ne (2, 3)$. However, if $(q, s)=(2, 3)$, recall that $2\le t\le s-1$ and so we necessarily have that $t=s-1=2$. In this case, $$\omega(G_U)=q^t+r= 4+r=(r+t)+2\le d_U+2,$$ a contradiction with our assumption $\omega(G_U)>d_U+2$. In particular, we have proved that $t\le s/2$ and then, for $A=B=V(H)_2$, we have that 
$$\# A(\#B)^{1/7}=q^{\frac{8}{7}t}\le q^{\frac{6t+s}{7}}=(\# A)^{6/7}q^{s/7}.$$ 
Since $B=V(H)_2$ is not contained in any subfield of $\F_{q^s}$ and $A\pm A\cdot B \subseteq U$, Lemma~\ref{lem:add} entails that
$$q^{d_U}=\# U\ge \max\{\# (A+A\cdot B) , \# (A-A\cdot B)\}\ge 2^{-1/4} q^{\frac{8}{7}t},$$
from where the result follows.

\section{Bilinear forms over finite fields}
In this section we provide some basics on bilinear and quadratic forms over finite fields and also two auxiliary results that are further used in Section 5. Let $B$ be a non degenerate symmetric $\F_q$-bilinear form of $\F_{q^n}$ (regarded as an $\F_q$-vector space of dimension $n$) and let $Q_B(w)=B(w, w)$ be its associated quadratic form. Moreover, if $q$ is odd, let $\chi_q$ denote the quadratic character of $\F_q$, i.e., for $a\in \F_q^*$, $\chi_q(a)=1$ or $\chi_q(a)=-1$ according to whether $a$ is a square of $\F_q$ or not, respectively. 

It is well known that if $q$ is odd, $Q_B$ can assume a diagonal form and any diagonalization has the form $\sum_{i=1}^na_iw_i^2, a_i\in \F_q^*$ with $\prod_{i=1}^n\chi_q(a_i)\in \{\pm1\}$ being a constant that depends only on $B$. We denote such constant by $\chi_q(B)$. For more details on this fact, see Subsection 2.1 in~\cite{shparlinski}. The following lemma provides some facts about bilinear forms.

\begin{lemma}\label{lem:bilinear}
Let $B, B_0$ be two non degenerate symmetric $\F_q$-bilinear forms of $\F_{q^n}$ and let $U\subseteq \F_{q^n}$ be an $\F_q$-vector space. Then the following hold: 
\begin{enumerate}[1.]
\item The $\F_q$-vector space $U_0$ comprising the elements $w$ such that $B(u, w)=0$ for every $u\in U$ has dimension $n-d_U$;
\item If $q$ is odd, we have that $\chi_q(B)=\chi_q(B_0)$ if and only if there exists an invertible $\F_q$-linear map $T:\F_{q^n}\to \F_{q^n}$ such that $B(x, y)=B_0(T(x), T(y))$ for every $x, y\in \F_{q^n}$.
\end{enumerate}
\end{lemma}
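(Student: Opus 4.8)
Lemma~\ref{lem:bilinear} has two independent parts, so I would treat them separately.

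For part 1, the plan is to recognize $U_0$ as the orthogonal complement of $U$ with respect to the nondegenerate form $B$, and to compute its dimension via the rank-nullity theorem. The key step is to fix an $\F_q$-basis $\{u_1,\dots,u_{d_U}\}$ of $U$ and consider the $\F_q$-linear map $\Phi\colon\F_{q^n}\to\F_q^{d_U}$ given by $\Phi(w)=(B(u_1,w),\dots,B(u_{d_U},w))$. By $\F_q$-bilinearity, $B(u,w)=0$ for every $u\in U$ if and only if $B(u_i,w)=0$ for each $i$, so $U_0=\ker\Phi$. I would then argue that $\Phi$ is surjective: if its image were a proper subspace of $\F_q^{d_U}$, there would be a nonzero $(c_1,\dots,c_{d_U})$ with $\sum_i c_i B(u_i,w)=B(\sum_i c_i u_i,\,w)=0$ for all $w$, forcing the nonzero element $\sum_i c_i u_i$ into the radical of $B$ and contradicting nondegeneracy. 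Hence $\dim_{\F_q}\ker\Phi=n-d_U$, which is exactly the claim. This part is routine linear algebra and should present no obstacle.

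For part 2, I would prove both implications. The backward direction is the easy one: if $B(x,y)=B_0(T(x),T(y))$ for an invertible $\F_q$-linear $T$, then diagonalizing $B_0$ as $\sum_{i=1}^n a_i w_i^2$ and composing with $T$ produces a diagonalization of $B$ whose coefficients are $a_i$ scaled by squares coming from the change of variables induced by $T$; more precisely, the Gram matrix of $B$ equals $M^{\mathsf T} G_0 M$ where $G_0$ is the Gram matrix of $B_0$ and $M$ is the matrix of $T$, so $\det B=(\det M)^2\det B_0$, and since $(\det M)^2$ is a nonzero square, $\chi_q(\det M)^2=1$, giving $\chi_q(B)=\chi_q(B_0)$ by the description of $\chi_q(B)$ as $\prod_i\chi_q(a_i)=\chi_q(\det B)$ up to the fixed normalization recorded before the lemma.

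The forward direction is where the real content lies, and I expect it to be the main obstacle. The plan is to invoke the classification of nondegenerate symmetric bilinear forms over a finite field $\F_q$ of odd characteristic: up to equivalence by invertible linear change of variables, such a form on an $n$-dimensional space is determined entirely by its discriminant class in $\F_q^*/(\F_q^*)^2$, equivalently by the invariant $\chi_q(\,\cdot\,)\in\{\pm1\}$. Thus if $\chi_q(B)=\chi_q(B_0)$, the two forms are equivalent, meaning there is an invertible $\F_q$-linear $T$ with $B(x,y)=B_0(T(x),T(y))$, which is precisely the desired conclusion. Concretely, I would diagonalize both forms, use the fact that any two diagonal forms with the same discriminant class and the same rank can be transformed into one another (a standard consequence of the theory of quadratic forms over finite fields, e.g. via the fact that a binary form $ax^2+by^2$ represents every element of $\F_q$ when the form is nondegenerate, allowing one to rescale the diagonal entries to any prescribed values with the same product class), and assemble the transformation $T$ by composing the two diagonalizing maps. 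The delicate point, and the part requiring care, is justifying the rescaling of diagonal coefficients while preserving invertibility and matching the product of quadratic characters; I would lean on the cited classification (Subsection 2.1 in~\cite{shparlinski}) rather than reprove it from scratch.
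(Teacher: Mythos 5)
Your proposal is correct and matches the paper's approach: the paper simply notes that item 1 is the statement that the orthogonal complement of $U$ with respect to a nondegenerate form has dimension $n-d_U$, and for item 2 it cites the standard classification of nondegenerate symmetric bilinear forms over finite fields of odd characteristic (page 79 of~\cite{book}), which is exactly the discriminant-class argument you outline. You have merely written out the routine details (the rank--nullity argument via the map $w\mapsto(B(u_i,w))_i$ and the Gram-matrix determinant computation) that the paper leaves to the references.
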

\begin{proof}
Item 1 follows by the fact that $U_0$ is precisely the orthogonal complement of $U$ (with respect to $B$, which is non degenerate). For the proof of item 2,  see page 79 in~\cite{book}.

\end{proof}

The following technical lemma provides information on the largest cardinality $M_B$ of a pairwise $B$-orthogonal set in $\F_{q^n}$ and it is crucial in the proof of Theorem~\ref{thm:n-1}. The number $M_B$ is known for fields of odd order~\cite{ah} (see also~\cite{shparlinski} for similar questions). Here we present an extended version of such result, providing partial information for fields of even order.
\begin{lemma}\label{thm:crucial}
Let $B$ be a non degenerate symmetric $\F_q$-bilinear form of $\F_{q^n}$ and let $t(B)$ be the largest dimension of an $\F_q$-vector space $W\subseteq \F_{q^n}$ such that $B(u, w)=0$ for every $u, w\in W$. Moreover, let $M_B$ be the largest cardinality of a set $E\subseteq \F_{q^n}$ such that $B(u, w)=0$ for every $u, w\in E$ with $u\ne w$. Then the following hold:
\begin{enumerate}[1.]
\item Suppose that $q$ is odd. Then $M_B=q^{t(B)}+n-2t(B)$, where $t(B)=\frac{n-1}{2}$ if $n$ is odd and 
$$t(B)=\frac{n+\chi_q(B)\cdot \chi_q(-1)^{n/2}-1}{2}\in \left\{\frac{n}{2}, \frac{n}{2}-1\right\},$$ if  $n$ is even.
\item If $q=2$ and $t(B)\le 2$, then $M_B\le n+1$.
\item If $q$ is even and either $q>2$ or $t(B)\ge 3$, then $M_B\le q^{t(B)}+n-2t(B)$ and $t(B)\le n/2$.
\end{enumerate}
\end{lemma}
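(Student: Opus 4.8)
The plan is to analyze the problem through the structure of the quadratic form $Q_B$ and its totally isotropic subspaces. The quantity $t(B)$ is the \emph{Witt index} of $Q_B$, i.e., the maximal dimension of a totally isotropic $\F_q$-subspace, and $M_B$ counts the largest set that is pairwise orthogonal but whose elements need not themselves be isotropic. The key observation is that a pairwise $B$-orthogonal set $E$ naturally splits according to whether its elements are isotropic ($Q_B(w)=0$) or anisotropic ($Q_B(w)\ne 0$). First I would let $E_0=\{w\in E : Q_B(w)=0\}$ and $E_1=E\setminus E_0$. The set $E_0\cup\{0\}$, being pairwise orthogonal \emph{and} consisting of isotropic vectors, spans a totally isotropic subspace, so $\# E_0 \le q^{t(B)}$ (this is where I would need to check that pairwise-orthogonal isotropic vectors are linearly independent modulo the radical, or more carefully span a totally isotropic space). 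Meanwhile the anisotropic elements of $E_1$ are pairwise orthogonal and each has $Q_B(w)\ne 0$, so they are linearly independent; combined with $E_0$ these give an orthogonal-type decomposition, forcing $\#E_1 \le n - 2t(B)$ roughly, since each anisotropic vector consumes a hyperbolic-plane's worth of the non-isotropic part. Summing yields $M_B \le q^{t(B)} + n - 2t(B)$.

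For item 1 (odd $q$), the plan is to invoke the classical theory: over a field of odd order the Witt index of a non-degenerate quadratic form in $n$ variables is determined by $n$, the discriminant $\chi_q(B)$, and $\chi_q(-1)$. I would recall that for $n$ odd the Witt index is exactly $(n-1)/2$ regardless of discriminant, while for $n$ even it is $n/2$ or $n/2-1$ according to whether the form is hyperbolic (plus-type) or not, which is precisely recorded by the sign $\chi_q(B)\cdot\chi_q(-1)^{n/2}$. This gives the stated formula for $t(B)$. The matching lower bound $M_B \ge q^{t(B)}+n-2t(B)$ comes from an explicit construction: take a totally isotropic subspace $W$ of dimension $t(B)$ (all $q^{t(B)}$ of its vectors are pairwise orthogonal and isotropic), then adjoin one anisotropic vector from each of the $n-2t(B)$ one-dimensional orthogonal summands in a Witt decomposition of the anisotropic complement, checking these remain orthogonal to $W$ and to each other. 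Here Lemma~\ref{lem:bilinear}, especially item 1 on orthogonal complements having the expected dimension, is the workhorse for keeping dimensions exact.

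The even-characteristic cases (items 2 and 3) are the main obstacle, because in characteristic two the correspondence between symmetric bilinear forms and quadratic forms degenerates: a symmetric bilinear form satisfies $B(w,w)=0$ automatically unless one is careful, and the alternating-versus-non-alternating distinction becomes delicate. I expect the upper-bound argument $M_B \le q^{t(B)}+n-2t(B)$ to survive in the regime $q>2$ or $t(B)\ge 3$ by essentially the same isotropic/anisotropic split, but the small exceptional case $q=2$ with $t(B)\le 2$ must be handled separately (item 2, giving only $M_B\le n+1$), since the counting bound $q^{t(B)}+n-2t(B)$ can be too optimistic or the linear-independence of the anisotropic part can fail when the field is too small to separate the relevant vectors. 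My strategy for item 3 would be to show that when $q>2$ the anisotropic vectors are still forced to be linearly independent and that $t(B)\le n/2$ holds since a totally isotropic space and its orthogonal complement cannot overlap too much; the inequality $t(B)\le n/2$ I would derive from Lemma~\ref{lem:bilinear} item 1, noting a totally isotropic $W$ satisfies $W\subseteq W^{\perp}$ so $d_W \le n-d_W$. The genuinely hard part is verifying that the naive counting $\#E_0\le q^{t(B)}$ is valid in characteristic two, because pairwise-orthogonal isotropic vectors need not span a totally isotropic subspace when $B$ is symmetric but not alternating; I would need to quarantine the radical of $B$ and argue on the quotient, and it is precisely this subtlety that forces the hypothesis $q>2$ or $t(B)\ge 3$ and produces the weaker conclusion in the $q=2$, $t(B)\le 2$ regime.
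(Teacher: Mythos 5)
Your decomposition of $E$ into isotropic and anisotropic parts is the right structural idea and matches what the paper imports from Ahmadi--Mohammadian (Lemma 3 of~\cite{ah}), but there is a genuine gap in how you count. Writing $d_0$ for the dimension of the span $W_0$ of the isotropic elements $E_0$, the inequalities you actually get are $\#E_0\le q^{d_0}$ and $\#E_1\le n-2d_0$ (because $\mathrm{span}(E_1)$ is nondegenerate, meets $W_0$ trivially, and both sit inside $W_0^{\perp}$, which has dimension $n-d_0$). Your stated bound $\#E_1\le n-2t(B)$ is false in general: take $E$ to be $\{0\}$ together with an orthogonal basis of anisotropic vectors, so that $d_0=0$ and $\#E_1=n$ while $t(B)>0$. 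What the decomposition really yields is $M_B\le f(d_0)$ with $f(s)=q^{s}+n-2s$ for \emph{some} $0\le d_0\le t(B)$, and the missing step --- which is the actual content of the paper's proof of items 2 and 3 --- is to maximize $f$ over that range. For $q>2$, and for $q=2$ once $s\ge 3$, $f$ is nondecreasing, so the maximum is $f(t(B))$ and item 3 follows; but for $q=2$ one has $f(0)=n+1>f(1)=f(2)=n$, which is exactly why the case $q=2$, $t(B)\le 2$ only gives $M_B\le n+1$.

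Relatedly, your diagnosis of the $q=2$ exception is wrong on both counts. Linear independence of pairwise orthogonal anisotropic vectors holds over every field (from $0=B(v_k,\sum_i c_iv_i)=c_kB(v_k,v_k)$), and pairwise orthogonal isotropic vectors \emph{do} span a totally isotropic subspace in characteristic two, since there $B(u+v,u+v)=B(u,u)+B(v,v)$; no quarantining of a radical is needed. The exceptional case is purely the non-monotonicity of $s\mapsto 2^{s}+n-2s$ for $s\le 2$, not any structural degeneration. Your plan for item 1 (classical Witt index formula in terms of $\chi_q(B)\cdot\chi_q(-1)^{n/2}$, plus the explicit set consisting of a maximal totally isotropic subspace together with an orthogonal basis of the anisotropic complement) is sound and reconstructs what the paper simply cites as Theorem 4 of~\cite{ah}, and your derivation of $t(B)\le n/2$ from $W\subseteq W^{\perp}$ is the same as the paper's appeal to item 1 of Lemma~\ref{lem:bilinear}.
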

\begin{proof}
Let $E\subseteq \F_{q^n}$ with $\# E=M_B$ be such that $B(u, w)=0$ for every $u, w\in E$ with $u\ne w$. We clearly have $0\in E$. Item 1 follows directly by Theorem 4 in~\cite{ah}, where the value of $M_B-1$ is computed (this is because they impose the restriction $0\not\in E$). For the sake of completeness, we provide an important remark: in the notation of Theorem 4 in~\cite{ah}, if $n$ is even, we have that $\epsilon(B)=1$ if and only if $\chi_q(B)\cdot \chi_q(-1)^{n/2}=1$. 

We now assume that $q$ is even. Lemma~3 in~\cite{ah} entails that $E=U\cup S$, where $U$ is an $\F_q$-vector space of dimension $t\le t(B)$ satisfying $B(u, w)=0$ for every $u, w\in U$, and  $\# S\le n-2t$. In particular, $M_B\le q^{t}+n-2t$ for some $0\le t\le t(B)$. If $q=2$ and $t(B)\le 2$, we easily verify that $M_B\le n+1$. This proves item 2. We proceed to the proof of item 3. Suppose that $q$ is even and either $q>2$ or $t(B)\ge 3$. If $f(s)= q^s+n-2s$, by computing the derivative of $f$, we directly verify that $f(s)$ is increasing on $s\in \Z_{\ge a}$, where $a=0$ if $q>2$ and $a=3$ if $q=2$. Moreover, for $q=2$, we have that $f(3)>\max\{f(0), f(1), f(2)\}$. Since either $q>2$ or $t(B)\ge 3$, we conclude that $M_B\le f(t(B))=q^{t(B)}+n-2t(B)$. The inequality $t(B)\le n/2$ follows directly by the definition of $t(B)$ and item 1 in Lemma~\ref{lem:bilinear}.
\end{proof}

\subsection{The trace map}
We now introduce the trace function of finite fields.

\begin{definition}
Let $q$ be a prime power and let $n\ge 2$ be a positive integer. The trace of $\F_{q^n}$ over $\F_q$ is the map $a\mapsto \Tr_n(a):= \sum_{i=0}^{n-1}a^{q^i}$. 
\end{definition}

The following lemma displays some basic properties of the trace functions. Its proof is quite elementary so we skip details.

\begin{lemma}\label{lem:trace}
Let $q$ be a prime power and let $n\ge 2$.
\begin{enumerate}[1.]
\item The map $a\mapsto \Tr_n(a)$ is $\F_q$-linear;
\item The image set of $\Tr_n$ over $\F_{q^n}$ equals $\F_q$. In particular, for every $a\ne 0$, the map $(x, y)\mapsto \Tr_n(axy)$ is a non degenerate symmetric $\F_q$-bilinear form of $\F_{q^n}$. 
\end{enumerate}
\end{lemma}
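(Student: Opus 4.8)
The final statement is Lemma~\ref{lem:trace}, which asserts two properties of the trace map. Let me sketch a proof plan.

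=== PROOF PROPOSAL ===

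\textbf{Plan.} The statement concerns two elementary but fundamental properties of the trace map $\Tr_n(a)=\sum_{i=0}^{n-1}a^{q^i}$: its $\F_q$-linearity, and the fact that its image is all of $\F_q$ (together with the bilinear form consequence). Both follow from the Frobenius automorphism $\phi\colon x\mapsto x^q$ and standard facts about polynomials over finite fields. The plan is to prove item 1 by direct computation using the additive and $\F_q$-scalar behaviour of the Frobenius, and to prove item 2 by first showing the image is contained in $\F_q$ (via Frobenius-invariance), then showing surjectivity by a counting/degree argument, and finally deducing the bilinear form claim.

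\textbf{Item 1.} First I would recall that in characteristic $p$ (where $q=p^m$), the Frobenius $\phi(x)=x^q$ satisfies $\phi(x+y)=(x+y)^q=x^q+y^q$ by the freshman's dream, and $\phi(\lambda x)=\lambda^q x^q=\lambda x^q$ for $\lambda\in \F_q$ since $\lambda^q=\lambda$ for every $\lambda\in\F_q$. Since $\Tr_n=\sum_{i=0}^{n-1}\phi^i$ is a sum of iterated Frobenius maps, each of which is $\F_q$-linear by the same reasoning (as $\phi^i(\lambda x)=\lambda^{q^i}x^{q^i}=\lambda\,\phi^i(x)$), the map $\Tr_n$ is $\F_q$-linear.

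\textbf{Item 2.} For the image claim, I would first check $\Tr_n(a)\in\F_q$ for every $a\in\F_{q^n}$: applying $\phi$ gives $\phi(\Tr_n(a))=\sum_{i=0}^{n-1}a^{q^{i+1}}=\sum_{i=1}^{n}a^{q^i}=\Tr_n(a)$, using $a^{q^n}=a$; hence $\Tr_n(a)$ is fixed by $\phi$ and so lies in $\F_q$. To establish that the image is exactly $\F_q$ (and not $\{0\}$), I would note that $\Tr_n$ is not the zero map, since $x\mapsto \sum_{i=0}^{n-1}x^{q^i}$ is a nonzero polynomial of degree $q^{n-1}<q^n$ and therefore cannot vanish on all of $\F_{q^n}$. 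Combined with the $\F_q$-linearity from item 1, a nonzero $\F_q$-linear map into the one-dimensional $\F_q$-space $\F_q$ must be surjective, so the image is all of $\F_q$. Finally, for the bilinear form statement, fix $a\ne 0$ and set $B(x,y)=\Tr_n(axy)$. Symmetry and $\F_q$-bilinearity follow from item 1 and commutativity of multiplication. For non-degeneracy, I would show that if $\Tr_n(axy)=0$ for all $y$, then $x=0$: the map $y\mapsto \Tr_n(axy)$ is $\F_q$-linear, and if it were identically zero for some $x\ne 0$ then, since $ax\ne 0$ and multiplication by $ax$ is a bijection of $\F_{q^n}$, the trace would vanish on all of $\F_{q^n}$, contradicting surjectivity.

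\textbf{Main obstacle.} None of the individual steps is deep; the only point requiring genuine care is the surjectivity of $\Tr_n$, i.e.\ ruling out that the image collapses to $\{0\}$. The cleanest route is the degree bound on the trace polynomial, though one could alternatively invoke the fact that the distinct field automorphisms $1,\phi,\dots,\phi^{n-1}$ are $\F_{q^n}$-linearly independent (Artin's theorem on independence of characters), which immediately forces $\Tr_n\not\equiv 0$. Since the paper itself describes the proof as ``quite elementary'' and skips details, I would keep the argument brief and lean on the degree bound.
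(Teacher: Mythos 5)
Your proof is correct and follows the standard argument that the paper itself omits (it explicitly skips the proof as ``quite elementary''): linearity via the Frobenius, containment of the image in $\F_q$ via Frobenius-invariance, surjectivity via the degree bound $q^{n-1}<q^n$, and non-degeneracy by reducing to surjectivity through the bijection $y\mapsto axy$. Nothing further is needed.
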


When $q$ is odd and $\chi_q(B)=\chi_q(B_0)$, we usually say that $B$ and $B_0$ are equivalent bilinear forms. We end this section showing that there are exactly two non equivalent bilinear forms arising from trace functions over finite fields of odd characteristic.

\begin{lemma}\label{lem:trace-equiv}
Let $q$ be an odd prime power and, for $\lambda\in \F_{q^n}^*$, set $B_{\lambda}(x, y)= \Tr_n(\lambda xy)$. Then $\chi_q(B_{\lambda})=\chi_q(B_1)$ if and only if $\lambda$ is a square.
\end{lemma}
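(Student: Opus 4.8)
The plan is to determine the value $\chi_q(B_\lambda)$ by diagonalizing the quadratic form $Q_\lambda(x) = \Tr_n(\lambda x^2)$ and tracking the product of the quadratic characters of the diagonal entries. The key observation is that multiplication by $\lambda$ relates $B_\lambda$ to $B_1$ in a controlled way, so I would first try to exploit item 2 of Lemma~\ref{lem:bilinear}, which says that $\chi_q(B_\lambda)=\chi_q(B_1)$ if and only if there is an invertible $\F_q$-linear map $T$ with $B_1(x,y)=B_\lambda(T(x),T(y))$ for all $x,y$.

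The main idea is that when $\lambda=\mu^2$ is a square, the map $T(x)=\mu x$ is $\F_q$-linear and invertible (since $\mu\ne 0$), and one computes directly
$$B_\lambda(T(x),T(y)) = \Tr_n(\lambda\cdot \mu x\cdot \mu y) = \Tr_n(\mu^2\cdot \mu^2 xy)=\Tr_n(\mu^4 xy).$$
This is not quite $B_1$, so I would instead choose $T(x)=\mu^{-1}x$, giving $B_\lambda(T(x),T(y))=\Tr_n(\mu^2\cdot\mu^{-2}xy)=\Tr_n(xy)=B_1(x,y)$. Hence for $\lambda$ a square, the forms $B_\lambda$ and $B_1$ are equivalent via an $\F_q$-linear isomorphism, and Lemma~\ref{lem:bilinear} immediately yields $\chi_q(B_\lambda)=\chi_q(B_1)$. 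This settles the ``if'' direction cleanly.

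For the converse, since $\F_{q^n}^*/(\F_{q^n}^*)^2$ has order two, every non square $\lambda$ differs from a fixed non square $\lambda_0$ by a square, so by the equivalence just established it suffices to show $\chi_q(B_{\lambda_0})\ne\chi_q(B_1)$ for one non square $\lambda_0$; equivalently, that $\chi_q$ takes distinct values on square and non square $\lambda$. The cleanest route is to compute $\chi_q(B_\lambda)$ explicitly as a function of $\lambda$. Fixing an $\F_q$-basis and letting $\det B_\lambda$ denote the determinant of the Gram matrix, the product $\prod_i\chi_q(a_i)$ over a diagonalization equals $\chi_q(\det B_\lambda)$ (the determinant changes by a square under change of basis, so this is well defined and equals $\chi_q(B_\lambda)$ up to the fixed discrepancy between $\det$ and the diagonal product, which is itself a square). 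Scaling the form by $\lambda$ scales the Gram matrix of $B_1$ by $\lambda$ along each coordinate, but more carefully $B_\lambda(x,y)=\Tr_n(\lambda xy)$ has Gram matrix $M_\lambda$ with entries $\Tr_n(\lambda e_i e_j)$; I expect the relation $\det M_\lambda = N(\lambda)\det M_1$ up to a square, where $N$ is the norm, since the bilinear form $\Tr_n(\lambda\,\cdot\,)$ is obtained from $\Tr_n(\cdot)$ by the $\F_{q^n}$-linear multiplication-by-$\lambda$ map whose $\F_q$-determinant is $N_{q^n/q}(\lambda)$.

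The main obstacle will be making this determinant computation rigorous and relating $\chi_q(N(\lambda))$ to whether $\lambda$ is a square. The relevant fact is that $N_{q^n/q}(\lambda)$ is a square in $\F_q$ if and only if $\lambda$ is a square in $\F_{q^n}$: indeed $\lambda=\gamma^{(q^n-1)/2}$ detects non squareness, and the norm map sends a generator of $\F_{q^n}^*$ to a generator of $\F_q^*$, so $\chi_q(N(\lambda))=\chi_{q^n}(\lambda)$ where the latter is the quadratic character on $\F_{q^n}$. Combining, $\chi_q(B_\lambda)=\chi_q(B_1)\cdot\chi_q(N(\lambda))=\chi_q(B_1)\cdot\chi_{q^n}(\lambda)$, which equals $\chi_q(B_1)$ precisely when $\lambda$ is a square, completing both directions. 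I would verify the norm–determinant identity either by the multiplicativity of $\det$ under composition of $\F_q$-linear maps or by a direct basis computation, treating this as the one genuinely technical point of the argument.
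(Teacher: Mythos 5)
Your proposal is correct, and the ``if'' direction coincides with the paper's (the paper uses $T(x)=ax$ with $\lambda=a^2$ to pass from $B_1$ to $B_\lambda$; you use $T(x)=\mu^{-1}x$ in the other direction, which is the same idea). For the converse, however, you take a genuinely different and in fact cleaner route. The paper argues by contradiction: assuming $\lambda$ is a non square and $\Tr_n(\lambda xy)=\Tr_n(T(x)T(y))$ for some invertible $\F_q$-linear $T$, it pairs up the eigenspaces $V_a=\{x: T(x)=ax\}$ and $V_{\lambda/a}$ via a rank--nullity argument, uses $a\ne\lambda/a$ (which needs $\lambda$ non square) to conclude $\prod_{x\in\F_{q^n}^*}T(x)/x=\lambda^{(q^n-1)/2}=1$, a contradiction. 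You instead identify $\chi_q(B)$ with the quadratic character of the determinant of the Gram matrix (well defined since a change of basis multiplies the determinant by a square, and in a diagonalizing basis the Gram matrix is $\mathrm{diag}(a_1,\dots,a_n)$ by polarization in odd characteristic), then use $M_\lambda=A^{T}M_1$ with $A$ the matrix of multiplication by $\lambda$, whose determinant is $N_{q^n/q}(\lambda)$, to get $\chi_q(B_\lambda)=\chi_q(N(\lambda))\,\chi_q(B_1)$; finally $\chi_q(N(\lambda))=\chi_{q^n}(\lambda)$ because the norm sends a generator of $\F_{q^n}^*$ to a generator of $\F_q^*$. All three of these facts check out, and your computation actually yields both directions at once. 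What each approach buys: yours is shorter and gives the explicit formula $\chi_q(B_\lambda)=\chi_q(B_1)\chi_{q^n}(\lambda)$, at the cost of invoking the discriminant characterization of $\chi_q(B)$ (which the paper never states explicitly, defining $\chi_q(B)$ only via diagonalizations) and the determinant-equals-norm fact; the paper's argument is more elementary in that it uses only item 2 of Lemma~\ref{lem:bilinear} and rank--nullity. If you write yours up, the one point to make fully rigorous is the step $\chi_q(B)=\chi_q(\det M)$; there is in fact no ``discrepancy'' to worry about, since in a diagonalizing basis the Gram matrix is exactly the diagonal matrix of the coefficients.
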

\begin{proof}
Suppose that $\lambda=a^2\ne 0$, hence $\Tr_n(\lambda xy)=\Tr_n(T(x)T(y))$ for $T(x)=ax$ and so item 2 in Lemma~\ref{lem:bilinear} entails that $\chi_q(B_{\lambda})=\chi_q(B_1)$. Conversely, suppose by contradiction that $\lambda$ is not a square and $\chi_q(B_{\lambda})=\chi_q(B_1)$. Item 2 in Lemma~\ref{lem:bilinear} entails that there exists an invertible $\F_q$-linear linear map $T:\F_{q^n}\to \F_{q^n}$ such that $\Tr_n(\lambda xy)=\Tr_n(T(x)T(y))$ for every $x, y\in \F_{q^n}$. For each $a\in \F_{q^n}^*$, let $V_{a}$ be the $\F_q$-vector space comprising the elements $x\in \F_{q^n}$ such that $T(x)=ax$. Suppose that $V_a$ has dimension $t>0$. Hence 
$\Tr_n(x(\lambda y- aT(y)))=0$ for every $y\in \F_{q^n}$ and every $x\in V_a$. In particular, if $W$ denotes the image set of the $\F_q$-linear map $L_a:y\mapsto \lambda y -a T(y)$, we have that $B_1(x, w)=0$ for every $x\in V_a, w\in W$. Item 1 in Lemma~\ref{lem:bilinear} implies that $W$ has dimension at most $n-t$. From the Rank-Nullity Theorem, we have that $\ker L_a=V_{\frac{\lambda}{a}}$ has dimension $s\ge n-(n-t)=t$. By the same reasoning, $V_{a}=V_{\frac{\lambda}{\frac{\lambda}{a}}}$ has dimension $t\ge s$ and so $s=t$. In particular, the possible values of $T(x)/x$ come in pairs $(a, \lambda/a)$ and the number of non zero solutions to the equation $T(x)=e x$ is the same for $e=a, \lambda/a$. Since $\lambda$ is not a square, we have that $a\ne \lambda/a$ for every $a\in \F_{q^n}^*$. Therefore, since $T(x)$ is invertible and $T(0)=0$, we have that 
$0\ne \prod_{x\in \F_{q^n}^*}T(x)=\prod_{x\in \F_{q^{n}}^*}x$ and so 
$$1=\prod_{x\in \F_{q^n}^*}\frac{T(x)}{x}=\lambda^{\frac{q^n-1}{2}}.$$
The latter implies that $\lambda$ is a square, a contradiction with our initial assumption.
\end{proof}

\section{The case $d_U=n-1$}

In this section we consider the $\F_{q}$-vector spaces $U \subsetneq \F_{q^n}$ of dimension $n-1$. Let $U_{q, n}:=\{\alpha\in \F_{q^n}\,|\, \Tr_n(\alpha)=0\}=\ker \Tr_n$ be the $\F_q$-vector space comprising the elements of $\F_{q^n}$ with trace zero over $\F_q$. From item 2 in Lemma~\ref{lem:trace} and the Rank-Nullity Theorem, $U_{q, n}$ has dimension $n-1$. 

It is direct to verify that the number of $\F_q$-vector spaces in $\F_{q^n}$ of dimension $n-1$ equals $\frac{q^n-1}{q-1}$. Moreover, if $\delta\in \fqn^*$ is such that $\delta U_{q, n}=U_{q, n}$, then the $q^{n-1}$-degree polynomials $T_n(x)=\sum_{i=0}^{n-1}x^{q^i}$ and $T_n(\delta^{-1}x)=\sum_{i=0}^{n-1}\delta^{-q^i}x^{q^i}$ vanish at every element of $U_{q, n}$. The latter implies that such polynomials are equal up to a multiplication by a scalar and a simple calculation entails that $\delta\in \F_q$. Conversely, for $\delta\in \F_q^*$, we have that $\delta U_{q, n}=U_{q, n}$ since $U_{q, n}$ is an $\F_q$-vector space. In particular, as $\delta$ runs over $\F_{q^n}^*$, the set $\delta U_{q, n}$ runs over $\frac{q^n-1}{q-1}$ distinct $\F_q$-vector spaces of dimension $n-1$. Therefore, for every $\F_{q}$-vector space $U \subsetneq \F_{q^n}$ of dimension $n-1$, there exists $\delta\in \F_{q^n}^*$ such that $U=\delta U_{q, n}$. 

In the following lemma we prove that there exist at most two classes of non isomorphic graphs arising from the graphs $G_U$ with $d_U=n-1$.

\begin{lemma}\label{lem:equiv}
Let $U\subsetneq \F_{q^n}$ be an $\F_q$-vector space of dimension $n-1$. Then the following hold:
\begin{enumerate}[1.]
\item If $q$ is even or $qn$ is odd, there exists $a\ne 0$ such that $U=a^2U_{q, n}$. In this case, the graphs $G_U$ and $G_{U_{q, n}}$ are isomorphic.
\item If $q$ is odd and $n$ is even, there exists a non square $\lambda$ (not depending on $U$) such that $U=a^2 \cdot \varepsilon U_{q, n}$ with $a \ne 0$ and $\varepsilon\in \{1, \lambda\}$. In this case, the graphs $G_U$ and $ G_{\varepsilon U_{q, n}}$ are isomorphic.
\end{enumerate}

\end{lemma}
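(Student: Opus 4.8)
The plan is to reduce everything to a single computation about which elements of $\F_q^*$ are squares in $\fqn$. The starting point is the elementary remark that for any $a\in\fqn^*$ and any $\F_q$-subspace $W\subsetneq\fqn$, the scaling map $x\mapsto a^{-1}x$ is a graph isomorphism $G_{a^2W}\to G_W$, since $xy\in a^2W$ if and only if $(a^{-1}x)(a^{-1}y)\in W$. Hence $G_{sW}\cong G_W$ for every nonzero square $s\in\fqn^*$, and the isomorphism type of $G_U$ depends only on the coset of $U$ modulo multiplication by squares of $\fqn^*$. Combining this with the discussion preceding the lemma --- every $U$ of dimension $n-1$ equals $\delta U_{q,n}$ for some $\delta\in\fqn^*$, with $\delta U_{q,n}=\delta'U_{q,n}$ exactly when $\delta/\delta'\in\F_q^*$ --- the task becomes to write the scalar $\delta$ as a product of a square of $\fqn^*$, an element of $\F_q^*$, and a factor $\varepsilon\in\{1,\lambda\}$.

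The key step is to decide, for $c\in\F_q^*$, when $c$ is a square in $\fqn$. By Euler's criterion this happens if and only if $c^{(q^n-1)/2}=1$. Writing $N=\sum_{i=0}^{n-1}q^i=\frac{q^n-1}{q-1}$, so that $\frac{q^n-1}{2}=\frac{q-1}{2}\cdot N$, and using $c^{(q-1)/2}=\chi_q(c)\in\{\pm1\}$, I obtain $c^{(q^n-1)/2}=\chi_q(c)^N$. When $q$ is odd each summand $q^i$ is odd, so $N\equiv n\pmod 2$ and therefore $c^{(q^n-1)/2}=\chi_q(c)^n$. Consequently, for $q$ odd, every $c\in\F_q^*$ is a square in $\fqn$ when $n$ is even, whereas when $n$ is odd $c$ is a square in $\fqn$ if and only if it is already a square in $\F_q$.

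It remains to assemble the cases. If $q$ is even the squaring map on $\fqn^*$ is a bijection, so $\delta=a^2$ for some $a\ne0$ and $U=a^2U_{q,n}$; the scaling isomorphism then gives $G_U\cong G_{U_{q,n}}$. If $q$ and $n$ are both odd, pick any non-square $c_0\in\F_q^*$ (which exists as $q\ge3$); by the computation above $c_0$ is a non-square of $\fqn$, so the subgroup $\F_q^*\cdot S$ generated by $\F_q^*$ and the set $S$ of squares strictly contains $S$ and hence equals $\fqn^*$. Thus $\delta=a^2c$ with $a\in\fqn^*$ and $c\in\F_q^*$, giving $U=\delta U_{q,n}=a^2U_{q,n}$ and $G_U\cong G_{U_{q,n}}$, which settles item 1. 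Finally, if $q$ is odd and $n$ is even, the computation shows $\F_q^*\subseteq S$, so the class of $\delta$ modulo squares is unaffected by the $\F_q^*$-scalar; fixing once and for all a non-square $\lambda\in\fqn^*$, either $\delta$ is a square, say $\delta=a^2$ (take $\varepsilon=1$), or $\delta$ is a non-square, in which case $\delta/\lambda$ is a square $a^2$ and $\delta=a^2\lambda$ (take $\varepsilon=\lambda$). In both subcases $U=a^2(\varepsilon U_{q,n})$ and the scaling map yields $G_U\cong G_{\varepsilon U_{q,n}}$, proving item 2. The only genuinely non-formal step is the parity identity $c^{(q^n-1)/2}=\chi_q(c)^n$ of the middle paragraph; everything else is bookkeeping about cosets of the squares in $\fqn^*$, so I expect that computation to be the crux of the argument.
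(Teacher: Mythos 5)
Your proposal is correct and follows essentially the same route as the paper: both reduce to the decomposition $U=\delta U_{q,n}$ established before the lemma, use the scaling map $x\mapsto a^{-1}x$ for the isomorphism $G_{a^2W}\cong G_W$, and settle the case distinction via the parity computation $c^{(q^n-1)/2}=\chi_q(c)^N$ with $N=\frac{q^n-1}{q-1}\equiv n\pmod 2$ for odd $q$. The paper phrases this as ``a non-square $\theta$ of $\F_q^*$ remains a non-square in $\F_{q^n}$ when $N$ is odd,'' which is exactly your crux identity specialized to $n$ odd.
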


\begin{proof}
We have seen that there exists $\delta\in \F_{q^n}^*$ such that $U=\delta U_{q, n}$. We consider the two items separately. 

\begin{enumerate}[1.]
\item It suffices to prove that we can take $\delta$ as a square; this is trivially true if $q$ is even. Suppose that $q, n$ are odd and $\delta$ is not a square. In particular, $N=\frac{q^n-1}{q-1}$ is odd and so $\theta^{(q^n-1)/2}=(\theta^{(q-1)/2})^N=(-1)^N-1$, where $\theta$ is any non square of $\F_q^*$. In particular, $\theta$ is not a square and so $\delta'=\theta \delta$ is a square.
Since $\theta\in \F_q$ and $U_{q, n}$ is an $\F_q$-vector space, we have that $\delta U_{q, n}=\delta' U_{q, n}$. 
\item It suffices to consider the case where $\delta$ is not a square. In this case, $\frac{\delta}{\lambda}=a^2$ for some $a\ne 0$ and so $U=\lambda a^2 U_{q, n}$. 
\end{enumerate}
The isomorphism statement in both items follows directly by the fact that $(a_1, a_2)\in E(G_U)$ if and only if $(aa_1, aa_2)\in E(G_{a^2U})$.
\end{proof}

Motivated by the previous lemma, we introduce the following definition.

\begin{definition}
Let $q$ be an odd prime power and let $n\ge 2$ be even. If $U\subsetneq \F_{q^n}$ is an $\F_q$-vector space of dimension $n-1$, we set $s(U)=1$ if $U=a^2U_{q, n}$ for some $a\ne 0$ and, otherwise, $s(U)=-1$.
\end{definition}

\begin{remark}\label{rem:cool}
We have seen that the number of $\F_q$-vector spaces in $\F_{q^n}$ of dimension $n-1$ equals $N=\frac{q^n-1}{q-1}$. Moreover, if $q$ is odd and $n$ is even, $N/2$ is an integer and then, for every $b\in \F_q^*$, we have that $b^{(q^n-1)/2}=b^{\frac{N}{2}(q-1)}=1$. In particular, every element of $\F_q^*$ is a square. The latter implies that we have at most $N/2$ distinct vector spaces $U$ of the form $a^2U_{q, n}$, i.e., with $s(U)=1$. A similar argument implies the same for $s(U)=-1$. In conclusion, the number of $U$'s with $s(U)=1$ and with $s(U)=-1$ coincide and are equal to $N/2$. 
\end{remark}

The next lemma ensures the existence of a special $\F_q$-basis for finite field extensions. For its proof, see Theorems 1 and 2 in~\cite{self-dual}.
\begin{lemma}\label{lem:basis}
Let $q$ be a prime power and $n\ge 2$ be a positive integer. Then there exists an $\F_q$-basis $\mathcal B_{q, n}=\{\beta_1, \ldots, \beta_n\}$ for $\F_{q^n}$ such that 
$$\Tr_n(\beta_i\beta_j)=\begin{cases}0 \,\,\text{if}\,\, i\ne j, \\ 1 \,\,\text{if}\,\, i=j>1,\\
\mu\,\,\text{if}\,\, i=j=1,\end{cases}$$
 for some $\mu \in \F_q^*$. Moreover, there exists such a basis with $\mu=1$ if and only if $q$ is even or $q\cdot n$ is odd. When $q$ is odd and $n$ is even, we can only take $\mu$ as an arbitrary non square of $\F_q$.
\end{lemma}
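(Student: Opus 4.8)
The plan is to read the statement as a diagonalization result for the symmetric bilinear form $B_1(x,y)=\Tr_n(xy)$, which is non-degenerate by Lemma~\ref{lem:trace}, and to put it in the shape whose Gram matrix is $\mathrm{diag}(\mu,1,\dots,1)$. First I would note that $B_1$ is \emph{not} alternating: its quadratic form is $Q(x)=B_1(x,x)=\Tr_n(x^2)$, which is not identically zero because $\Tr_n$ is onto $\F_q$ (Lemma~\ref{lem:trace}). A Gram--Schmidt procedure --- repeatedly extracting a vector $v$ with $B_1(v,v)\ne 0$ and restricting to $v^{\perp}$ --- then yields an $\F_q$-basis whose Gram matrix is $\mathrm{diag}(a_1,\dots,a_n)$ with every $a_i\in\F_q^{*}$. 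Since rescaling a basis vector by $c$ multiplies the corresponding diagonal entry by the square $c^{2}$, the only obstruction to normalizing an entry to $1$ is whether $a_i$ is a square in $\F_q$.

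I would then separate the two characteristics. When $q$ is even the Frobenius $x\mapsto x^{2}$ is a bijection of $\F_q$, so every $a_i$ is a square and can be rescaled to $1$, giving $\mu=1$; the only care needed is to run Gram--Schmidt so that the complement stays non-alternating at each stage (which one can always arrange, since the form is non-alternating), so that the diagonalization goes through. When $q$ is odd I would instead invoke the invariant $\chi_q$ together with item 2 of Lemma~\ref{lem:bilinear}: the form with Gram matrix $\mathrm{diag}(\mu,1,\dots,1)$ has $\chi_q$-value $\chi_q(\mu)$, hence is equivalent to $B_1$ --- and therefore realized by a basis $\beta_i=T^{-1}(e_i)$ for the linear map $T$ supplied by the lemma --- exactly when $\chi_q(\mu)=\chi_q(B_1)$. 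Everything thus reduces to computing $\chi_q(B_1)$.

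Computing $\chi_q(B_1)$ is the heart of the matter and, I expect, the main obstacle. I would extract it from the Gram determinant in a power basis $1,\alpha,\dots,\alpha^{\,n-1}$, where $\alpha$ generates $\F_{q^n}$ over $\F_q$: this is the Hankel matrix $\bigl[\Tr_n(\alpha^{i+j})\bigr]$, whose determinant is the discriminant $\Delta=\prod_{0\le i<j\le n-1}(\alpha^{q^{i}}-\alpha^{q^{j}})^{2}=V^{2}$. Because the determinant of a Gram matrix is well defined modulo squares, $\chi_q(B_1)=\chi_q(\Delta)=\Delta^{(q-1)/2}=V^{\,q-1}$. Now the Frobenius $x\mapsto x^{q}$ cyclically permutes the conjugates $\alpha^{q^{0}},\dots,\alpha^{q^{n-1}}$, so it acts on the Vandermonde product $V$ through the sign of an $n$-cycle; hence $V^{q}=(-1)^{\,n-1}V$, that is $V^{\,q-1}=(-1)^{\,n-1}$. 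Therefore $\chi_q(B_1)=(-1)^{\,n-1}$, which equals $1$ precisely when $n$ is odd.

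It remains to assemble the conclusion. For $q$ odd and $n$ odd we have $\chi_q(B_1)=1$, so $\mu$ may be taken to be any square, in particular $\mu=1$; together with the even case this yields the assertion that $\mu=1$ is attainable exactly when $q$ is even or $qn$ is odd. For $q$ odd and $n$ even we have $\chi_q(B_1)=-1$, and the equivalence criterion forces $\chi_q(\mu)=-1$, so $\mu$ must be a non-square, while Lemma~\ref{lem:bilinear} shows conversely that every non-square is attained. The two points I would be most careful about are the characteristic-two diagonalization (keeping the orthogonal complement non-alternating throughout) and the sign bookkeeping in the Frobenius action on $V$.
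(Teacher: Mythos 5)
Your argument is essentially correct, but it takes a genuinely different route from the paper: the paper does not prove this lemma at all, it simply cites Theorems 1 and 2 of Jungnickel--Menezes--Vanstone \cite{self-dual} on (weakly) self-dual bases. What you propose is a self-contained proof via the classification of symmetric bilinear forms: diagonalize the trace form $B_1(x,y)=\Tr_n(xy)$, reduce the normalization of the diagonal entries to the single invariant $\chi_q(B_1)$ using item 2 of Lemma~\ref{lem:bilinear}, and compute that invariant as the discriminant of the Gram matrix in a power basis. The discriminant computation is the heart of your argument and it is right: $\det\bigl[\Tr_n(\alpha^{i+j})\bigr]=V^2$ with $V$ the Vandermonde determinant of the conjugates, $\chi_q(B_1)=V^{q-1}$, and the Frobenius permutes the conjugates by an $n$-cycle, giving $V^{q}=(-1)^{n-1}V$ and hence $\chi_q(B_1)=(-1)^{n-1}$. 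This matches exactly the dichotomy in the statement (and is consistent with the value $\chi_q(B_1)=\chi_q(\mu)=-1$ that the paper later extracts from the lemma when $q$ is odd and $n$ is even). Your approach buys transparency --- the parity of $n$ enters through the sign of an $n$-cycle, which explains \emph{why} the answer depends on $n\bmod 2$ --- at the cost of having to handle characteristic $2$ separately, which the cited reference packages away.

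The one place where you assert rather than prove is the characteristic-$2$ diagonalization: you claim one can "always arrange" that the orthogonal complement stays non-alternating. This is true, but it is not automatic --- for instance, a poor first choice of $v$ with $Q(v)\ne 0$ can leave a nondegenerate alternating (hence non-diagonalizable) form on $v^{\perp}$. The standard fix is short: if $B|_{v^{\perp}}$ is alternating, pick $u,w\in v^{\perp}$ with $B(u,w)=1$ and replace $v$ by $v'=v+u$; then $Q(v')=Q(v)\ne 0$ and $w+Q(v)^{-1}v'\in (v')^{\perp}$ is anisotropic, so $(v')^{\perp}$ is non-alternating and the induction proceeds. (This is Albert's classical theorem that a non-alternating symmetric bilinear form in characteristic $2$ admits an orthogonal basis; your observation that $Q(x)=\Tr_n(x^2)=\Tr_n(x)^2$ is not identically zero is exactly what makes it applicable.) With that half-paragraph supplied, your proof is complete.
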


We obtain the following result.

\begin{theorem}\label{thm:n-1}
Let $q$ be a prime power and let $n\ge 2$ be an integer such that either $q>2$ or $n\ge 6$. If $U\subsetneq \F_{q^n}$ is an $\F_q$-vector space of dimension $n-1$, then the following hold:

\begin{enumerate}[1.]
\item if $q$ is even or $n$ is odd, $\omega(G_U)=q^{\lfloor n/2\rfloor}+n-2\lfloor n/2\rfloor$;
\item if $q$ is odd and $n$ is even, then $\omega(G_U)=q^{n/2}$  if 
$$(q\mod 4, n\mod 4, s(U))\in \{(\pm 1, 0, -1), (-1, 2, 1), (1, 2, -1)\},$$
and $\omega(G_U)=q^{n/2-1}+2$ if
$$(q\mod 4, n\mod 4, s(U))\in \{(\pm 1, 0, 1), (-1, 2, -1), (1, 2, 1)\}.$$
\end{enumerate}
Moreover, if $q=2$ and $2\le n\le 5$, then $\omega(G_U)=n+1$.
\end{theorem}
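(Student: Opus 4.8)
The plan is to recast the clique problem as a problem about pairwise orthogonal sets for a trace bilinear form and then to invoke Lemma~\ref{thm:crucial}. First I would observe that every $\F_q$-vector space $U$ of dimension $n-1$ is the kernel of a functional $x\mapsto \Tr_n(\mu x)$ for some $\mu\ne 0$ (indeed, from the discussion preceding this theorem $U=\delta U_{q,n}$, so $\mu=\delta^{-1}$ works). Setting $B(x,y)=\Tr_n(\mu xy)$, which is a non-degenerate symmetric $\F_q$-bilinear form by Lemma~\ref{lem:trace}, the edge condition $ab\in U$ becomes $B(a,b)=0$. Hence a clique in $G_U$ is exactly a set $E\subseteq \F_{q^n}$ with $B(u,w)=0$ for all distinct $u,w\in E$, so that $\omega(G_U)=M_B$ in the notation of Lemma~\ref{thm:crucial}. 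Using Lemma~\ref{lem:equiv} I would normalise $\mu$: up to graph isomorphism I may take $U=U_{q,n}$ (so $\mu=1$) when $q$ is even or $n$ is odd, and $U=\varepsilon U_{q,n}$ with $\varepsilon\in\{1,\lambda\}$ (a fixed non-square) when $q$ is odd and $n$ is even, the two possibilities being recorded by $s(U)$.

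For $q$ odd the result is almost immediate from item~1 of Lemma~\ref{thm:crucial}, which gives $M_B=q^{t(B)}+n-2t(B)$ with $t(B)$ an explicit function of $\chi_q(B)$. When $n$ is odd, $t(B)=(n-1)/2$ yields $q^{(n-1)/2}+1$, matching item~1 of the statement. When $n$ is even, everything reduces to identifying the sign $\sigma:=\chi_q(B)\chi_q(-1)^{n/2}$, since $\sigma=1$ gives $t(B)=n/2$ (hence $\omega=q^{n/2}$) and $\sigma=-1$ gives $t(B)=n/2-1$ (hence $\omega=q^{n/2-1}+2$). To compute $\chi_q(B)$ I would use the basis of Lemma~\ref{lem:basis}: for $q$ odd and $n$ even the Gram matrix of $B_1$ is $\mathrm{diag}(\mu,1,\dots,1)$ with $\mu$ a non-square, so $\chi_q(B_1)=-1$, and then Lemma~\ref{lem:trace-equiv} gives $\chi_q(B)=-s(U)$. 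Substituting the values of $\chi_q(-1)^{n/2}$ according to $q\bmod 4$ and $n\bmod 4$ produces exactly the two lists of triples in the statement; this sign bookkeeping is routine but must be carried out carefully.

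For $q$ even, Lemma~\ref{thm:crucial} only provides the upper bounds (items~2 and~3), so the work is to pin down $t(B)$ and to supply matching cliques. I would compute $t(B)$ directly: since $\Tr_n(x^2)=\Tr_n(x)^2$ in characteristic two, the isotropic vectors form the hyperplane $V_0=\ker\Tr_n$, and $B$ restricts to an alternating form on $V_0$ whose radical is $V_0\cap V_0^{\perp}$. A short computation gives $V_0^{\perp}=\F_q\cdot 1$, and $1$ is isotropic precisely when $n$ is even, so this radical has dimension $\rho=1$ if $n$ is even and $\rho=0$ if $n$ is odd. The structure theory of alternating forms then gives $t(B)=(n-1+\rho)/2=\lfloor n/2\rfloor$. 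For the lower bound I would exhibit explicit cliques: when $n$ is even the subfield $\F_{q^{n/2}}$ is totally isotropic (since $\Tr_{\F_{q^n}/\F_{q^{n/2}}}$ annihilates $\F_{q^{n/2}}$ in characteristic two), giving a clique of size $q^{n/2}$; when $n$ is odd, a maximal totally isotropic subspace together with one anisotropic vector in its orthogonal complement gives a clique of size $q^{(n-1)/2}+1$. Combined with items~2 and~3 of Lemma~\ref{thm:crucial}, this yields $\omega(G_U)=q^{\lfloor n/2\rfloor}+n-2\lfloor n/2\rfloor$ whenever $q>2$ or $n\ge 6$.

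Finally, the small cases $q=2$, $2\le n\le 5$ are exactly those where $t(B)=\lfloor n/2\rfloor\le 2$, so item~2 of Lemma~\ref{thm:crucial} gives only $M_B\le n+1$. Here I would use the orthonormal basis $\{\beta_1,\dots,\beta_n\}$ of Lemma~\ref{lem:basis} (with $\mu=1$, available since $q$ is even): the set $\{0,\beta_1,\dots,\beta_n\}$ is pairwise $B$-orthogonal, giving $\omega(G_U)\ge n+1$ and hence equality. I expect the main obstacle to be the even-characteristic case: unlike the odd case, $t(B)$ is not handed to us by Lemma~\ref{thm:crucial}, so the radical computation for the restricted form and the explicit totally isotropic constructions are where the real content lies.
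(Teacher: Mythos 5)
Your proposal is correct and follows essentially the same route as the paper: reduce to computing $M_B$ for a trace bilinear form via the normalization in Lemma~\ref{lem:equiv}, apply Lemma~\ref{thm:crucial}, and for $q$ odd, $n$ even determine $\chi_q(B_\varepsilon)=-s(U)$ through the basis of Lemma~\ref{lem:basis} and Lemma~\ref{lem:trace-equiv}. The only divergence is in characteristic two, where you pin down $t(B)=\lfloor n/2\rfloor$ and the matching cliques via the radical of the alternating form on $\ker\Tr_n$ and the totally isotropic subfield $\F_{q^{n/2}}$, whereas the paper simply exhibits the totally isotropic space spanned by $\beta_1+\beta_2,\ldots,\beta_{2n_0-1}+\beta_{2n_0}$ coming from the same special basis; both arguments are valid and yield identical bounds.
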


\begin{proof}
Let $\mathcal B_{q, n}=\{\beta_1, \ldots, \beta_n\}$ be an $\F_q$-basis for $\F_{q^n}$ as in Lemma~\ref{lem:basis}. We prove items 1 and 2 separately.

\begin{enumerate}[1.]
\item Suppose that $q$ is even or $n$ is odd. From Lemmas~\ref{lem:equiv} and~\ref{lem:basis}, we can assume that $U=U_{q, n}$ and $\Tr_{n}(\beta_i^2)=1$ for every $1\le i\le n$. In particular, for every $x\ne y\in \F_{q^n}$ with $x=\sum_{i=1}^{n}x_i \beta_i, y=\sum_{i=1}^{n}y_i \beta_i$, we have that $(x, y)\in E(G_U)$ if and only if
$$0=\Tr_n(xy)=\sum_{i=1}^nx_iy_i=: B(x, y).$$
Lemma~\ref{lem:trace} entails that $B$ is a non degenerate symmetric $\F_q$-bilinear form of $\F_{q^n}$. In particular, under the notation of Lemma~\ref{thm:crucial}, $\omega(G_U)=M_{B}$. The result for $q$ odd now follows directly by item 1 in Lemma~\ref{thm:crucial}. 

Assume now that $q$ is even and $n=2n_0+R$ with $R\in\{0, 1\}$.  If $W$ is the $\F_q$-vector space generated by the elements $\beta_{2i-1}+\beta_{2i}, i=1, \ldots, n_0$, then $W$ has dimension $n_0$ and $B(u, w)=0$ for every $u, w\in W$. From item 3 in Lemma~\ref{thm:crucial}, we have that $t(B)=n_0$. Moreover, under our initial assumptions, either $q>2$ or $n_0\ge 3$. In particular, from item 3 in Lemma~\ref{thm:crucial}, we conclude that $M_B\le q^{n_0}+n-2n_0=q^{n_0}+R$. Therefore, it suffices to produce a set $E$ such that $\# E=q^{n_0}+R$ and $B(u, w)=0$ for every $u, w\in E$ with $ u\ne w$. We may take $E=W$ if $n$ is even and $E=W\cup\{\beta_{2n_0+1}\}$ if $n$ is odd.

\item Suppose that $q$ is odd and $n$ is even. From Lemma~\ref{lem:equiv}, we may assume that $U=\varepsilon U_{q, n}$ with $\varepsilon\in \{1, \lambda\}$, where $\lambda\in \F_{q^n}^*$ is a fixed non square (not depending on $U$). In particular, for $x\ne y\in \F_{q^n}$, we have that $(x, y)\in E(G_U)$ if and only if 
$$0=\Tr_n(\varepsilon^{-1} xy)=: B_{\varepsilon}(x, y).$$
Lemma~\ref{lem:trace} entails that $B_{\varepsilon}$ is a non degenerate symmetric $\F_q$-bilinear form of $\F_{q^n}$. In particular, from Lemma~\ref{thm:crucial}, $\omega(G_U)=M_{B_{\varepsilon}}$ and then by the same lemma we just need to compute the value of $\chi_q(B_{\varepsilon})$.
We observe that if $x=\sum_{i=1}^nx_i\beta_i\in \F_{q^n}$, then
$B_1(x, x)=\mu x_1^2+\sum_{i=2}^nx_i^2$ and so $\chi_q(B_1)=\chi_q(\mu)=-1$, where $\mu$ is as in Lemma~\ref{lem:basis} (recall that we are under the hypothesis that $q$ is odd and $n$ is even). Since $\lambda$ is not a square, Lemma~\ref{lem:trace-equiv} entails that $\chi_q(B_{\lambda})\ne \chi_q(B_{1})$ and so $\chi_q(B_{\lambda})=1$.  In particular, $\chi_q(B_{\varepsilon})=-s(U)$ and the result follows by Lemma~\ref{thm:crucial}, after computing the values of $s(U)\cdot \chi_q(-1)^{n/2}\in\{ \pm 1\}$.
\end{enumerate}

Assume that $q=2$ and $2\le n\le 5$. Lemma~\ref{lem:trace} entails that $\omega(G_U)\le n+1$. Following the notation and arguments given in item 1 for $q$ even, just take $E=\{\beta_1, \ldots, \beta_n, 0\}$.
\end{proof}
We end this paper with an interesting remark.
\begin{remark}
Theorem~\ref{thm:n-1} entails that if $q$ is odd, $n\ge 2$ is even and $d_U=n-1$, then $\omega(G_U)$ may assume two distinct values, according to whether $U=a^2U_{q, n}$ for some $a\ne 0$ or not. In particular, from Lemma~\ref{lem:equiv}, we conclude that there are precisely two isomorphism classes among the graphs $G_U$ with $d_U=n-1$. Moreover, from Remark~\ref{rem:cool}, each class contains exactly $\frac{q^n-1}{2(q-1)}$ graphs. A similar argument (extending Lemma~\ref{lem:equiv} and using Proposition~\ref{prop:basic}) implies the same result for $\F_q$-vector spaces of dimension $1$ if $q>3$ is odd and $n$ is even.
\end{remark}

\section*{Acknowledgements}
We thank the anonymous reviewers for their helpful comments and suggestions. This work was partially supported by CNPq (309844/2021-5).


\begin{thebibliography}{1}

\bibitem{ah} O.~Ahmadi and A.~Mohammadian. Sets with many pairs of orthogonal vectors over
finite fields. {\em Finite Fields Appl.} 37: 179--192, 2016.

\bibitem{pod} R.A.~Podestá and D.E.~Videla. The Waring's problem over finite fields through generalized Paley graphs. {\em Discrete Math} 344: 112324, 2021.

\bibitem{bourgain}  J.~Bourgain and A.~A.~Glibichuk. Exponential sum estimate over subgroup in an arbitrary finite
field, {\em J. d'Analyse Math.} 115: 51--70, 2011.

\bibitem{cohen} S. Cohen. Clique numbers of Paley graphs. {\em Quaestiones Math.}, 11: 225--231, 1988.

\bibitem{book}  L.C. Grove, {\em Classical Groups and Geometric Algebra}, Grad. Stud. Math., vol. 39, American Math- ematical Society, Providence, RI, 2002.

\bibitem{shparlinski}A.~Iosevich, I.~E.~Shparlinski  and M.~Xiong. Sets with integral distances over finite fields. {\em Trans. Amer. Math. Soc.} 362(4): 2189--2204, 2009.

\bibitem{paley}  G.~A.~Jones. Paley and the Paley Graphs. In: Jones G., Ponomarenko I., \v{S}ir\'{a}\v{n} J.(eds) Isomorphisms, Symmetry and Computations in Algebraic Graph Theory. WAGT 2016. Springer Proceedings in Mathematics and Statistics 305, pp. 155--183, Springer, Cham.


\bibitem{self-dual} D.~ Jungnickel, A.J.~Menezes and S.A.~Vanstone. On the number of self-dual basis of $GF(q^m)$ over $GF(q)$. {\em Proc. Amer. Math. Soc.}, 109(1): 23--29, 1990.

\bibitem{bence} S. Mattarei. Inverse-closed additive subgroups of fields. {\em Israel J. Math.} 159:343--348, 2007.

\bibitem{csaba}  C. Schneider and A.C. Silva. Cliques and colorings in generalized Paley graphs and
an approach to synchronization. {\em J. Algebra Appl.} 14(6) 1550088, 2015.

\bibitem{yip} C. H. Yip. On the clique number of Paley graphs and generalized Paley graphs. MSc thesis - University of British Columbia (2021). \url{http://hdl.handle.net/2429/77005}.

\bibitem{yip1} C.H. Yip. On the directions determined by cartesian products and the clique number
of generalized Paley graphs. {\em Integers}. 21, Paper A51, 2021.

\bibitem{yip2} C.H. Yip. On the clique number of Paley graphs of prime power order. {\em Finite Fields
Appl.} 77: 101930, 2022.

\end{thebibliography}
\end{document}